\documentclass[a4paper,11pt]{article}
\usepackage[hidelinks]{hyperref}
\hypersetup{
	colorlinks=true,   
	linktoc=all,        
	linkcolor=red,     
	citecolor=blue,}     
\usepackage{lineno}
\usepackage{tikz}
\usepackage{graphicx}
\usepackage{subfigure}
\usepackage{amssymb}
\usepackage{latexsym,bm}
\usepackage{multicol}
\usepackage{indentfirst}
\usepackage{amssymb,amsfonts}
\usepackage{amsmath}
\usepackage{amsthm}
\usepackage{setspace}
\newcommand{\ignore}[1]{}

\newtheorem{theorem}{Theorem}[section]
\newtheorem{lemma}[theorem]{Lemma}
\newtheorem{claim}[theorem]{Claim}
\newtheorem{fact}[theorem]{Fact}
\newtheorem{corollary}[theorem]{Corollary}

\newtheorem{conjecture}[theorem]{Conjecture}

\newtheorem{question}[theorem]{Question}

\newtheorem{observation}[theorem]{Observation}
\begin{document}
	\date{}
	\begin{spacing}{1.03}
		\title{A Note on Generalized Erd\H{o}s–Rogers Problems}
		\author{Longma Du,\footnote{School of Mathematics, Shandong University, Jinan, 250100, P.~R.~China. Email: {\tt 202520303@mail.sdu.edu.cn}. }
       \;\; \;  Xinyu Hu,\footnote{Data Science Institute, Shandong University, Jinan, 250100, P.~R.~China. Email: {\tt huxinyu@sdu.edu.cn}. Supported by National Postdoctoral Fellowship Program (C-tier) (GZC20252005).}
			\;\; \; Ruilong Liu,\footnote{School of Mathematics, Shandong University, Jinan, 250100, P.~R.~China. Email: {\tt liuruilong@mail.sdu.edu.cn}. }
\;\; \; Guanghui Wang \footnote{
School of Mathematics, Shandong University, Jinan, 250100, P. R. China. Email: {\tt ghwang@sdu.edu.cn}. Supported in part  by State Key Laboratory of Cryptography and Digital Economy Security.}
}
\maketitle
\begin{abstract}
For a $k$-uniform hypergraph $F$ and positive integers $s$ and $N$, the generalized Erd\H{o}s-Rogers function $f^{(k)}_{F,s}(N)$ denotes the largest integer $m$ such that every $K_s^{(k)}$-free $k$-graph on $N$ vertices contains an $F$-free induced subgraph on $m$ vertices. In particular, if $F = K^{(k)}_t$, then we write $f^{(k)}_{t,s}(N)$ for $f^{(k)}_{F,s}(N)$.
Mubayi and Suk (\emph{J. London. Math. Soc. 2018}) conjectured that $f^{(4)}_{5,6}(N)=(\log \log N)^{\Theta(1)}$. 
Motivated by this conjecture, we prove that
$f^{(4)}_{5^{-},6}(N)=(\log\log N)^{\Theta(1)}$,
where $5^{-}$ denotes the $4$-graph obtained from $K_5^{(4)}$ by deleting one edge. Our proof combines a probabilistic construction of a $2$-coloring of pairs with a stepping-up construction and an analysis of multi-layer local extremum structures. 
Furthermore, we derive an upper bound for a more general Erdős–Rogers function, which implies the lower bound $r_4(6,n)\ge 2^{2^{cn^{1/2}}}$.
By applying a variant of the Erd\H{o}s–Hajnal stepping-up lemma due to Mubayi and Suk, we also slightly improve the lower bound for $r_k(k+2,n)$. 
\medskip

\textbf{Keywords:} Ramsey numbers; Erd\H{o}s-Rogers problems; Stepping-up lemma 

\end{abstract}

\section{Introduction}
A $k$-uniform hypergraph $H$ ($k$-graph for short) consists of a vertex set $V(H)$ and a collection of $k$-element subsets of $V(H)$. Let $K_n^{(k)}$ be the complete $k$-graph on $n$ vertices. 
The classical off-diagonal Ramsey number $r_k(s,n)$ is the minimum $N$ such that every red/blue coloring of the edges of $K_N^{(k)}$ results in a monochromatic red copy of $K^{(k)}_s$ or a monochromatic blue copy of $K^{(k)}_n$, where $k,s$ are fixed and $n$ tends to infinity. 
Ramsey numbers have been extensively studied since Ramsey’s seminal work \cite{R}, with many classic results \cite{A-K-S-1,A-K-S-2,B-K-2,C-G-E,E-H-Con,E-R-2,K-1,L-R-Z,M-S-3,M-S-4,S-1}.
In recent years, there have been many breakthroughs on Ramsey numbers, especially in graphs \cite{C-J-M-S,C-F-S-3,H-H-K-P,H-M-S,M-S-X,M-V-2,C-G-M-S,G-N-N-W}. We refer the reader to Conlon, Fox and Sudakov \cite{C-F-S-4}, and Morris \cite{Mor} for two nice surveys on this topic.

Given a $k$-graph $F$, an $F$-independent set in a $k$-graph $H$ is a vertex subset $S$ such that $H[S]$ contains no copy of $F$. If $F=K^{(k)}_k$, then it is just an independent set. Let $\alpha_F(H)$ denote the size of the largest $F$-independent set in $H$. If $F=K^{(k)}_t$ with $t\ge k$, we use the simpler notation $\alpha_t(H)$. For two $k$-graphs $F$ and $G$ and a positive integer $N$, the
\emph{generalized Erd\H{o}s--Rogers} function $f^{(k)}_{F,G}(N)$ denotes
the largest $m$ such that every $G$-free $k$-graph on $N$ vertices
contains an $F$-free induced subgraph on $m$ vertices. Namely,
\[
f^{(k)}_{F,G}(N)=\min\{\alpha_F(H): |V(H)|=N \text{ and } H \text{ is } G\text{-free}\}.
\]
Clearly, $f^{(k)}_{F',G}(N)\le f^{(k)}_{F,G}(N) $ if $F'\subseteq F$ and $f^{(k)}_{F,G}(N)\le f^{(k)}_{F,G'}(N) $ if $G' \subseteq G$. When $G=K_s^{(k)}$, we abbreviate $f^{(k)}_{F,K_s^{(k)}}(N)$ by
$f^{(k)}_{F,s}(N)$. In particular, if $F=K_t^{(k)}$, then we write
$f^{(k)}_{t,s}(N)$.
\medskip

This function has been extensively studied over the past 60 years in the case where $G$ and $F$ are cliques and is known as the Erd\H{o}s-Rogers function (see, e.g. \cite{ak,bh,D-R-R,dr,E-R-1,GJ20,H-L,Janzer,kri-1,kri-2,Sud-1,Sud-2,W-1}). Recently, Balogh, Chen and Luo \cite{B-C-L-1}, Mubayi and Verstra\"{e}te \cite{M-V-1-1} and Gishboliner, Janzer and Sudakov \cite{G-J-S-1}, started the systematic study of the function $f^{(2)}_{F,G}$. Their results mostly concern the case when $G$ is a clique, and establish bounds for $f^{(2)}_{F,s}(N)$ when $F$ satisfies certain properties such as clique-free, bipartite, containing a cycle, or having large minimum degree. In \cite{H-N-1}, He and Nie considered the natural generalization of this line of research to hypergraphs, and obtained a sufficient condition for $f^{(k)}_{F,G}(N)$ to be polynomial. In this paper, we focus on the hypergraph setting, so we do not consider the graph case here. We refer the reader to \cite{C-F-S-4,M-S-1} for excellent surveys on the graph setting. 

The problem of determining $f^{(k)}_{k,s}(N)$ is equivalent to determining the Ramsey number $r_k(s,n)$. Formally,
\begin{align}\label{R-EG}
r_k(s,n)=\min\{N: f^{(k)}_{k,s}(N)\ge n\}.
\end{align}

Dudek and Mubayi \cite{D-M} proved that $\Omega((\log_{(k-2)} N)^{1/4+o(1)})\le f^{(k)}_{t,t+1}(N)\le O((\log N)^{1/(k-2)})$ by showing \begin{align}\label{ditui}
    f^{(k-1)}_{t-1,s-1}(\lfloor(\log N)^{1/(k-1)}\rfloor)\le f^{(k)}_{t,s}(N)
\end{align}
 for $3\le k\le t<s$, where $\log_{(0)}(N)=N$ and as usual, $\log_{(i+1)}N=\log (\log_{(i)}N)$. All logarithms are to base $2$ unless otherwise stated. An interesting question is to determine the exponent of $\log N$ in $f^{(3)}_{t,t+1}(N)$. Conlon, Fox and Sudakov \cite{C-F-S-2} improved this to $f^{(k)}_{t,t+1}(N)\ge \Omega((\log_{(k-2)} N)^{1/3+o(1)})$ by showing $f^{(3)}_{t,t+1}(N)\ge \Omega((\log N)^{1/3+o(1)})$, and they also proposed to close the gap between the upper and lower bounds. Motivated by this direction, Mubayi and Suk \cite[Theorem 3.2]{M-S-5} showed that $f^{(k)}_{k+1,k+2}(N)\le O(\log_{(k-13)} N)$ for $k\ge 14$. They proposed the following conjecture.
 \begin{align}\label{mubayiand suk}
     f^{(k)}_{k+1,k+2}(N)=(\log_{(k-2)}N)^{\Theta(1)}
 \end{align}
for each integer $k\ge 4$. This conjecture is equivalent to the statement that $f^{(k)}_{k+1,s}(N)=(\log_{(k-2)}N)^{\Theta(1)}$ for every $s\ge k+2$ once we note that $(\log_{(k-2)}N)^{\Omega(1)}\le f^{(k)}_{k+1,s}(N)\le f^{(k)}_{k+1,k+2}(N)$
for $k\ge 3$ and $s\ge k+2$. Here, the first inequality follows from (\ref{ditui}) and $f^{(2)}_{3,s}(N)\ge N^{1/(s-2)}$, which was shown by Bollob\'{a}s and Hind \cite{bh}. Recently, the authors \cite{D-H-L-W} settled the conjecture for all $s\ge k+7$, proving that $f^{(4)}_{5,s}(N)=(\log\log N)^{O(1)}$ for all $s\ge 11$ 
via a stepping-up lemma.

In \cite{F-H-L-L}, Fan, Hu, Lin and Lu showed that  $f^{(k)}_{k+1,k+2}(N)\le O( \log_{(k-3)} N)$ for $k \ge 5$.
Thus, only one iterated logarithm gap remains in proving (\ref{mubayiand suk}). Moreover, \cite[Theorem 4.5 and 4.6]{F-H-L-L} proved a variant of the Erd\H{o}s-Hajnal stepping-up lemma to obtain that in order to solve (\ref{mubayiand suk}), it suffices to show that
\begin{conjecture}[Mubayi and Suk \cite{M-S-5}]\label{con-1}
It holds that $f^{(4)}_{5,6}(N)=(\log \log N)^{\Theta(1)}.$
\end{conjecture}

Motivated by this conjecture, we consider the following related generalized Erd\H{o}s--Rogers function, which corresponds to a slightly weaker problem.

For $k \ge 4$ and $1 \le t \le k+1$, let $H_t^{k}$ denote the unique $k$-graph on $k+1$ vertices with $t$ edges. Equivalently, $H_t^k$ is obtained from $K_{k+1}^{(k)}$ by deleting $k+1-t$ edges.
Clearly, $\alpha_k(H)=\alpha_{H^{k}_1}(H)$ for any non-complete $k$-graph $H$. 
Thus, \begin{align}\label{k-H_k}
       f^{(k)}_{k,s}(N)=f^{(k)}_{{H^{k}_1},s}(N).
      \end{align} 
We write $H^{4}_4$ as $5^{-}$. Note that 
\begin{align}\label{lower bound of f}
    f^{(4)}_{5^{-},6}(N)\ge f^{(4)}_{{H^{4}_1},6}(N)\overset{(\ref{k-H_k})}{=}f^{(4)}_{4,6}(N)\ge (\log \log N)^{\Omega(1)},
\end{align}
where the last inequality holds since (\ref{R-EG}) and $r_4(s,n)\le2^{2^{n^{O(1)}}}$ for $s\ge 5$ as shown by Erd\H{o}s and Rado \cite{E-R-2}.

In this paper, we first use the idea in \cite{D-H-L-W} to show that the magnitude of (\ref{lower bound of f}) is sharp. Specifically, we provide a probabilistic method to find a red/blue coloring $\phi$ on the pairs of $[2^{\Theta(n)}]$ that satisfies a  certain ``good'' property (see Lemma \ref{phi-$H^4_4$-6}). Based on  $\phi$, 
we then construct a $4$-graph $H$ on  $2^{2^{\Theta(n)}}$ vertices that is $K^{(4)}_6$-free and satisfies $\alpha_{5^-}(H)<2^{5}n^{5}+1$. This construction relies on analyzing $5$-layer local maxima sequences in the stepping-up method and yields a new upper bound $f^{(4)}_{5^-,6}(N)\le (\log\log N)^{O(1)}$. Therefore, we can obtain the following theorem.

\begin{theorem}\label{center-1}
We have $f^{(4)}_{5^-,6}(N)= (\log\log N)^{\Theta(1)}$.
\end{theorem}

More generally, by applying a variant of the Erd\H{o}s-Hajnal stepping-up lemma by Fan, Hu, Lin and Lu \cite[Theorem 4.5 and 4.6]{F-H-L-L}, we can obtain the following corollary.
\begin{corollary}\label{coro-1}
For each $k\ge5$, we have $f^{(k)}_{H^{k}_4,k+2}(N)= (\log_{(k-2)} N)^{\Theta(1)}$. 
\end{corollary}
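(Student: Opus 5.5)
We deduce Corollary~\ref{coro-1} from Theorem~\ref{center-1} by induction on $k$, with a matching lower bound coming from the Ramsey-type bound \eqref{ditui}.

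\emph{Lower bound.} Since $H^{k}_1\subseteq H^k_4$, we have
\[
f^{(k)}_{H^k_4,k+2}(N)\ge f^{(k)}_{H^k_1,k+2}(N)\overset{(\ref{k-H_k})}{=}f^{(k)}_{k,k+2}(N).
\]
By \eqref{R-EG} together with the Erd\H{o}s--Rado bound $r_k(k+2,n)\le \mathrm{twr}_{k}(n^{O(1)})$, this is at least $(\log_{(k-2)}N)^{\Omega(1)}$.

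\emph{Upper bound.} We proceed by induction on $k\ge 4$. The base case $k=4$ is exactly the upper bound $f^{(4)}_{5^-,6}(N)\le(\log\log N)^{O(1)}$ from Theorem~\ref{center-1}. For the step $k-1\to k$ with $k\ge5$, we invoke the stepping-up variant of Fan, Hu, Lin and Lu \cite[Theorems 4.5 and 4.6]{F-H-L-L}. Start from a $K^{(k-1)}_{k+1}$-free $(k-1)$-graph $G$ on $N'$ vertices with $\alpha_{H^{k-1}_4}(G)<n$. The lemma steps it up to a $K^{(k)}_{k+2}$-free $k$-graph $H$ on $2^{N'}$ vertices with $\alpha_{H^k_4}(H)\le n^{O(1)}$. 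Iterating $k-4$ times from the base case gives
\[
f^{(k)}_{H^k_4,k+2}(N)\le(\log_{(k-2)}N)^{O(1)}.
\]

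The main point to check is that the Fan--Hu--Lin--Lu construction indeed propagates the forbidden structure $H^{k-1}_4\mapsto H^k_4$, and not merely cliques. Their lemma colors a $k$-set $\{v_1<\dots<v_k\}$ via the $\delta$-sequence $\delta_1,\dots,\delta_{k-1}$, using the color of $\{\delta_1,\dots,\delta_{k-1}\}$ in $G$ when the $\delta$'s are monotone, and local extremum rules otherwise.

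Take a $(k+1)$-set $S$ in $H$ spanning at least four edges. We would show that either some monotone window of $\delta$'s yields a $(k-1)$-set in $G$ carrying at least four edges of $G$ (a copy of $H^{k-1}_4$), or the configuration falls into the non-monotone cases. In the non-monotone cases, the rules of \cite{F-H-L-L} ensure that any large set free of these configurations has only polynomial size. The key observation is that the four edges of $S$ correspond to omitting four distinct vertices. Omitting an interior vertex in a monotone stretch preserves the $\delta$-set up to deleting one element, so the edges of $S$ map to edges of a $k$-set of $\delta$'s in $G$, and four of them give $H^{k-1}_4$.

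If the theorems in \cite{F-H-L-L} are stated for general families $H^k_t$, as the paper's phrasing suggests, this verification reduces to checking that their hypotheses hold with $t=4$. The standard bound $\alpha\le n^{O(1)}$ then follows from the usual Erd\H{o}s--Hajnal monotone-path/pigeonhole argument on the $\delta$'s.
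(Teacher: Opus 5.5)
Your outline follows the paper's route. The lower bound comes from $H^k_1\subseteq H^k_4$, (\ref{k-H_k}) and Erd\H{o}s--Rado. The upper bound comes from stepping up Theorem~\ref{center-1} with \cite[Theorems 4.5 and 4.6]{F-H-L-L}, and the paper gives no more detail than that citation. Two local slips need fixing.

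First, the Erd\H{o}s--Rado bound is $r_k(k+2,n)\le \mathrm{twr}_{k-1}(n^{O(1)})$. With the paper's convention $\mathrm{twr}_1(x)=x$, the bound $\mathrm{twr}_{k}(n^{O(1)})$ that you wrote only yields $(\log_{(k-1)}N)^{\Omega(1)}$, not $(\log_{(k-2)}N)^{\Omega(1)}$.

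Second, your sketched propagation check runs in the wrong direction, and as stated it is false.
\begin{itemize}
\item A copy of $H^{k-1}_4$ lives on a $k$-set of $\delta$'s, not a $(k-1)$-set.
\item Suppose $u_1<\dots<u_{k+1}$ has increasing $\delta$-sequence with $\delta$-set $T$, and monotone $k$-tuples are edges of $H$ exactly when their $\delta$-set is an edge of $G$. Deleting $u_1$ and deleting $u_2$ both remove $\delta_1$, while deleting $u_j$ with $3\le j\le k+1$ removes $\delta_{j-1}$. Hence
\[
e\bigl(H[\{u_1,\dots,u_{k+1}\}]\bigr)=e(G[T])+\mathbf{1}\bigl[T\setminus\{\delta_1\}\in E(G)\bigr].
\]
\item So four edges on $k+1$ vertices of $H$ can come from only three edges of $G[T]$. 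The implication ``four edges in $H$ give $H^{k-1}_4$ in $G$'' therefore fails.
\end{itemize}

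What the $\alpha_{H^k_4}$ bound actually needs is the converse, and the identity gives it directly. A copy of $H^{k-1}_4$ in $G$ on a monotone window of $\delta$'s lifts, via Property IV, to $k+1$ vertices of $H$ spanning at least four edges, that is, a copy of $H^k_4$. The $K^{(k)}_{k+2}$-freeness part concerns cliques, not $H^k_4$, so it is the same as in the clique setting of \cite{F-H-L-L}.
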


We now return to the off-diagonal Ramsey number $r_k(s,n)$. 
For $3$-graphs, Conlon, Fox and Sudakov \cite{C-F-S-3} obtained that for every $s\ge4$, $2^{\Omega(n\log n)}\le r_3(s,n)\le 2^{O(n^{s-2}\log n)}$, 
thereby improving the upper bound of Erdős and Rado \cite{E-R-2} and the lower bound of Erdős and Hajnal \cite{E-H-Con}. For $s>k\ge 4$, it is known that $r_k(s,n)<twr_{k-1} (n^{O(1)})$ \cite{E-R-2}, where the tower function $twr_{k}(x)$ is defined by $twr_{1}(x)=x$ and $twr_{i+1}(x)=2^{twr_{i}(x)}$. By applying the Erd\H{o}s-Hajnal stepping-up lemma in the off-diagonal setting, it follows that $r_k(s,n)\ge twr_{k-1} (\Omega(n))$, for $k\ge 4$ and for all $s\ge 2^{k-1}-k+3$. A fundamental and important conjecture about $r_k(s,n)$ was proposed by Erd\H{o}s and Hajnal \cite{E-H-Con}.
\begin{conjecture}[Erd\H{o}s and Hajnal \cite{E-H-Con}]\label{E-H-C}
    For fixed integers $4\le k<s$, it holds that
$r_k(s,n)\ge \operatorname{twr}_{k-1}(\Omega(n)).$
\end{conjecture} 
Erd\H{o}s and Hajnal (see \cite{G-R-S-1}) 
showed that $r_4(7,n)\ge 2^{2^{\Omega(n)}}$. In \cite{cfs}, Conlon, Fox and Sudakov modified the Erd\H{o}s-Hajnal stepping-up lemma to show that Conjecture \ref{E-H-C} holds for all $s\ge \lceil\frac{5k}{2}\rceil-3$. Mubayi and Suk \cite{M-S-4} as well as Conlon, Fox and Sudakov, independently verified Conjecture \ref{E-H-C} for $k\ge 4$ and $s\ge k+3$. However, determining the lower bounds for $r_k(k+1,n)$ and $r_k(k+2,n)$ is much more difficult. Mubayi and Suk \cite{M-S-3} established the lower bounds $r_k(k+1,n)\ge twr_{k-2} (n^{\Omega(\log n)})$ and $r_k(k+2,n)\ge twr_{k-1} ({\Omega(n^{1/5})})$, which represent the previous best results.

		We also construct, by the probabilistic method, a red/blue coloring $\phi$ of the pairs of $[2^{\Theta(n)}]$ with a certain ``good'' property (see Lemma \ref{phi-$H^4_2$}). Using $\phi$, 
		we then construct a $4$-graph $H$ on  $2^{2^{\Theta(n)}}$ vertices that is $K_6^{(4)}$-free and satisfies $\alpha_{H^4_2}(H)<n^2$.  This construction relies on an analysis of monotone sequences together with a greedy argument within the stepping-up method, and yields the upper bound $f^{(4)}_{H^4_2,6}(N)\le (\log\log N)^{2}$. Consequently, we obtain the following theorem.

\begin{theorem}\label{center-2}
    We have $f^{(4)}_{H^4_2,6}(N)\le O((\log\log N)^2)$.
\end{theorem}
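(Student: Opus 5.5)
We will prove Theorem~\ref{center-2} by building, for every large $n$, a $K_6^{(4)}$-free $4$-graph $H$ on $N=2^{2^{cn}}$ vertices with $\alpha_{H^4_2}(H)<n^2$. Since $\log\log N=\Theta(n)$, this gives $f^{(4)}_{H^4_2,6}(N)\le n^2=O((\log\log N)^2)$, with monotonicity in $N$ handling intermediate values. The construction has two stages: a random $2$-coloring of pairs, followed by a stepping-up construction.

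\emph{Step 1 (random coloring).} Let $M=2^{cn}$. We take $\phi:\binom{[M]}{2}\to\{\text{red},\text{blue}\}$ whose red graph contains no triangle and whose blue graph contains no clique of order roughly $n$, or a slightly stronger ``good'' property in the spirit of Lemma~\ref{phi-$H^4_2$}. Such a $\phi$ exists by the standard alteration argument. We color each pair red independently with probability $p\approx M^{-1/2}$ and delete red triangles. The first-moment bound on blue $n$-sets is $\binom{M}{n}(1-p)^{\binom n2}$, and it is $o(1)$ when $M=2^{\Theta(n)}$ and the constants are chosen appropriately. If the target property needs monotone-type conditions, for example no blue ``monotone path'' of length $n$, we include them in the same union bound.

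\emph{Step 2 (stepping up).} Let $V=\{0,1\}^{M}$, so $|V|=2^M=N$, and order $V$ lexicographically. For $a_1<\dots<a_r$ in $V$ let $\delta_i=\delta(a_i,a_{i+1})$ be the first coordinate where they differ. The $\delta_i$ have the usual properties: consecutive values are distinct, and for $i<j$ we have $\delta(a_i,a_j)=\min_{i\le l<j}\delta_l$. A $4$-set $a_1<a_2<a_3<a_4$ has a $\delta$-sequence $(\delta_1,\delta_2,\delta_3)$. We declare it an edge of $H$ exactly when the sequence is monotone, say $\delta_1<\delta_2<\delta_3$ or $\delta_1>\delta_2>\delta_3$, and the pairs $(\delta_1,\delta_2)$ and $(\delta_2,\delta_3)$ are red under $\phi$. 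The definition may be adjusted, for instance by using only one monotone direction or a single red pair, as the two verifications below require.

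\emph{$K_6^{(4)}$-freeness.} Take six vertices with $\delta$-sequence $\delta_1,\dots,\delta_5$. If every $4$-subset were an edge, the consecutive quadruples would force $\delta_1,\dots,\delta_5$ to be monotone with all consecutive pairs red. Non-consecutive quadruples such as $\{a_1,a_2,a_4,a_5\}$ have $\delta$-sequence $(\delta_1,\min(\delta_2,\delta_3),\delta_4)$, and being edges forces further pairs like $(\delta_1,\delta_3)$ to be red. Together these give a red triangle, for example on $\delta_1,\delta_2,\delta_3$, which contradicts Step 1.

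\emph{Bounding $\alpha_{H^4_2}$ (the main obstacle).} Let $S\subseteq V$ with $|S|\ge n^2$; we must find five vertices of $S$ spanning at least two edges. Sort $S$, take its $\delta$-sequence $\delta_1,\dots,\delta_{m-1}$, and apply Erd\H{o}s--Szekeres to obtain a monotone subsequence of length about $n$. Using the min-property, we then pass greedily to a subset $S'\subseteq S$ of size about $n$ whose own $\delta$-sequence is monotone; this greedy pass is exactly why the bound is $n^2$. The $\delta$-values of $S'$ form about $n$ vertices of $[M]$, so by Step 1 two consecutive or nearly consecutive values span a red pair, and we want this to produce two $4$-edges inside five points. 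This is the delicate part. The two edges share three vertices, so they need overlapping red pairs in one monotone run. We therefore expect to choose the edge rule, or strengthen the good property in Step 1, so that a blue-free pattern of length $n$ in a monotone sequence forces a red ``path'' $\delta_i,\delta_{i+1},\delta_{i+2},\delta_{i+3}$. For instance, we can require that $\phi$ has no blue $n$-set and that red is dense enough along every long sequence to contain two overlapping red consecutive pairs. We then verify by the union bound that such a $\phi$ still exists with $M=2^{\Theta(n)}$. Reconciling this density requirement with red-triangle-freeness is the step most likely to need adjustment.
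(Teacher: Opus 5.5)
Your Step 1 cannot be carried out, and this is a structural problem, not a matter of choosing constants. Since $R(3,k)\le\binom{k+1}{2}$, every coloring of $\binom{[M]}{2}$ with no red triangle has a blue clique of size about $\sqrt{2M}=2^{\Theta(n)}$, far more than $n$. Your first-moment bound is therefore wrong: with $p\approx M^{-1/2}$ one has $\binom{M}{n}(1-p)^{\binom n2}\ge (M/n)^n e^{-o(1)}=2^{\Omega(n^2)}$. Such a blue clique $T$ also destroys the independence bound directly. The $2^{|T|}$ vectors of $\{0,1\}^M$ that vanish outside $T$ have all their $\delta$-values in $T$. Under your rule, which needs a red pair among $\delta_1,\delta_2,\delta_3$, these vectors span no edge at all, so they form an independent set of size $2^{2^{\Omega(n)}}$. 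More generally, suppose $K_6^{(4)}$-freeness rests on $\phi$ avoiding a red clique while every $n$-set must contain red pairs. Then Ramsey's theorem caps $M$ at $n^{O(1)}$, and you only get a bound polynomial in $\log N$, not in $\log\log N$.

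The missing idea is to build $K_6^{(4)}$-freeness into the edge rule so that it holds for \emph{every} $\phi$. With the paper's rules (i)--(iii), suppose $u_1<\dots<u_5$ span a clique with $\delta$-sequence $(\delta_1'<\delta_2'<\delta_3'<\delta_4')$. Then $(u_1,u_2,u_3,u_4)$ forces $\phi(\delta_1',\delta_3')$ to be red by (i), while $(u_1,u_2,u_4,u_5)$ forces it to be blue. The non-monotone cases are handled by a case analysis around the unique maximum $\delta_k$. Once this is in place, $\phi$ can be a uniform random coloring. It only has to plant in every $n$-set the mixed pattern of Lemma~\ref{phi-$H^4_2$}: red on $(a_i,a_k)$ and $(a_j,a_\ell)$, blue on the three consecutive pairs. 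This is achievable with $M=2^{cn}$ via a partial Steiner system and a union bound, and on a realizable monotone $\delta$-sequence it produces two rule-(i) edges on five vertices.

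Your independence step has a second, independent gap. Erd\H{o}s--Szekeres gives a monotone subsequence of the consecutive $\delta_i$, but not a vertex subset whose own $\delta$-sequence is that subsequence. A vertex subset of size $k$ with monotone $\delta$-sequence corresponds to a root-to-leaf path in the binary trie of $S$ that branches $k-1$ times to the same side. A trie in which every path branches at most $a$ times left and $b$ times right has at most $\binom{a+b}{a}$ leaves. So a set of $n^2$ vertices can have all its monotone-$\delta$ subsets of size only $O(\log n)$. With edges supported only on monotone patterns, no greedy pass can produce $n$ realizable monotone $\delta$'s.

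The paper obtains them from edges on local-extremum patterns. Rules (ii) and (iii) imply that, in an $H^4_2$-independent set, the largest $\delta$ in the current interval lies within $n-1$ of its left end or is its last element. Otherwise $v_a,v_{a+1},v_{a+2},v_b,v_{b+1}$ span two edges, whichever color $\phi(\delta_a,\delta_{i_{r+1}})$ has. Peeling off these maxima into $L_r$ and $R_r$ yields, within $2n-3$ steps, $n$ nested maxima. They form a monotone sequence realizable by Property II, which contradicts Lemma~\ref{bad-$H^4_2$}. This peeling, where a left step costs up to $n-1$ indices and a right step costs one, is the true source of the $n^2$.
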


\noindent\textit{Remark.} 
By the same construction as in the proof of Theorem~\ref{center-2}, together with a more delicate analysis, one also obtains $f^{(4)}_{H_3^4,6}(N)\le O((\log\log N)^3)$. Since the argument is entirely analogous to that of Theorem~\ref{center-2}, we omit the details.

Since $f^{(4)}_{H^4_2,6}(N)\ge f^{(4)}_{H^4_1,6}(N)\overset{(\ref{k-H_k})}{=}f^{(4)}_{4,6}(N)$, it follows from (\ref{R-EG}) that the following corollary holds.

\begin{corollary}\label{coro-2}
    For all $n\ge 5$, we have $r_4(6,n)\ge 2^{2^{cn^{1/2}}}$, where $c>0$ is an absolute constant.
\end{corollary}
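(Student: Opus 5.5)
This is a direct translation of Theorem~\ref{center-2} into the Ramsey language, and I would derive it that way. The chain of comparisons above gives, for every $N$,
\[
f^{(4)}_{4,6}(N)=f^{(4)}_{H^4_1,6}(N)\le f^{(4)}_{H^4_2,6}(N)\le C(\log\log N)^2,
\]
where $C$ is the implicit constant in Theorem~\ref{center-2}. The middle inequality holds because $H^4_1\subseteq H^4_2$, so monotonicity in $F$ applies. By (\ref{R-EG}), $r_4(6,n)>N$ whenever $f^{(4)}_{4,6}(N)<n$. It therefore suffices to choose $N$ as large as possible subject to $C(\log\log N)^2<n$.

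Concretely, I would take
\[
N=2^{2^{cn^{1/2}}}
\]
with $c$ small, say $c<C^{-1/2}$ (and also small enough to absorb the range where the $O(\cdot)$ bound only holds for $N\ge N_0$). Then $\log\log N=cn^{1/2}$, so
\[
C(\log\log N)^2=Cc^2 n<n.
\]
Hence $f^{(4)}_{4,6}(\lfloor N\rfloor)<n$, which gives $r_4(6,n)>N$. Concretely, there is a $K_6^{(4)}$-free $4$-graph on $N$ vertices with no independent set of size $n$. Coloring its edges red and its non-edges blue yields a coloring with no red $K_6^{(4)}$ and no blue $K_n^{(4)}$.

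Two small points need care. First, the $O(\cdot)$ in Theorem~\ref{center-2} only applies for $N$ larger than some absolute $N_0$. For $n\ge 5$ in a bounded range, I would shrink $c$ so that $2^{2^{cn^{1/2}}}$ is below the trivial bound $r_4(6,n)\ge n$. Second, taking integer parts of $N$ only changes constants.

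None of this is a genuine obstacle. The only real content is Theorem~\ref{center-2} itself, which I take as given.
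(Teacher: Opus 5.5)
Your proposal is correct and is essentially the paper's argument: the paper derives the corollary in one line from Theorem~\ref{center-2} using $f^{(4)}_{H^4_2,6}(N)\ge f^{(4)}_{H^4_1,6}(N)=f^{(4)}_{4,6}(N)$ and (\ref{R-EG}), and your choice $N=2^{2^{cn^{1/2}}}$ with $c<C^{-1/2}$ just spells out the constant bookkeeping the paper leaves implicit. For the small cases, fix the exceptional range as $5\le n<N_0$, independent of $c$, so that shrinking $c$ does not enlarge it; when $n\ge N_0$ but $2^{2^{cn^{1/2}}}<N_0$, the trivial bound $r_4(6,n)\ge n$ already suffices.
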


In \cite{M-S-3}, Mubayi and Suk produced a variant of the classic Erd\H{o}s-Hajnal stepping-up lemma, namely, they showed that for $k\ge 5$ and $n\ge s\ge k+1$, $r_k(s,2kn)>2^{r_{k-1}(s-1,n)-1}$. Applying this stepping-up lemma together with Corollary \ref{coro-2}, we obtain 
$r_k(k+2,n)\ge twr_{k-1}(\Omega(n^{1/2}))$, which slightly improves the previous best lower bound $twr_{k-1}(\Omega(n^{1/5}))$ of  Mubayi and Suk.

Beyond these results, it would be interesting to refine the methods developed here to address the following question.

\begin{question}
    Is it true that $f^{(4)}_{5^-,5}(N)=(\log\log N)^{O(1)}$?
\end{question}
\noindent

A positive answer would imply the conjectured double-exponential lower bound for $r_4(5,n)$, which, combined with the stepping-up lemma, would then prove the conjecture of Erd\H{o}s and Hajnal. It is the last remaining open case concerning the tower height for classical off-diagonal hypergraph Ramsey numbers.

\section{Properties of the stepping-up technique}\label{sec2}
In this paper, we will apply several variants of the Erd\H{o}s-Hajnal stepping-up lemma. We shall use the following notations and definitions unless otherwise stated.

Given some natural number $D$, let $V = \{0, 1, \ldots, 2^D - 1\}$. Then for any $v \in V$, write
$v = \sum_{i=0}^{D-1} v(i)2^i$ where  $v(i) \in \{0, 1\}$ for each  $i$.
For any $u \neq v$, let $\delta(u, v)$ denote the largest $i \in \{0, 1, \ldots, D - 1\}$ such that $u(i) \neq v(i)$. We always use $\langle v_1, v_2,\ldots, v_r\rangle$ to denote an ordered set with $v_1< v_2<\cdots<v_r$ under the order relation. Given any set $S=\langle v_1, v_2,\ldots, v_r\rangle$, we always write for $i\in[r-1]$, $\delta_i=\delta(v_i,v_{i+1})$ and $\delta(S)=(\delta_1,\ldots,\delta_{r-1})=(\delta_i)_{i=1}^{r-1}$. Sometimes we write $(\delta_i)_{i=1}^{r-1}$ as $\{\delta_i\}_{i=1}^{r-1}$. For convenience, if inequalities are known between consecutive $\delta$’s, this will be indicated in the sequence by replacing the comma with the respective sign. For instance, assume that $S=\langle v_1,\ldots, v_5\rangle$ and $\delta_1< \delta_2 >\delta_3< \delta_4$. Then since $\delta(v_1,v_2,v_3,v_4)=(\delta_1,\delta_2,\delta_3)$
has $\delta_1< \delta_2 >\delta_3$, we will write
$\delta(v_1,v_2,v_3,v_4)=(\delta_1<\delta_2>\delta_3)$.
Similarly, if not all inequalities are known, as in $\delta(v_1,v_2,v_4,v_5)$, we write $\delta(v_1,v_2,v_4,v_5)=(\delta_1<\delta_2~,~\delta_4)$.

We say that $\delta_i$ is a \emph{local minimum} if
$\delta_{i-1}>\delta_i<\delta_{i+1}$, a \emph{local maximum} if $\delta_{i-1}<\delta_i>\delta_{i+1}$, and a \emph{local extremum} if it is either a local minimum or a local maximum. We call $\delta_i$ a \emph{local monotone} if $\delta_{i-1}<\delta_i<\delta_{i+1}$ or $\delta_{i-1}>\delta_i>\delta_{i+1}$. We say $\delta_1,\ldots,\delta_{r-1}$
form a monotone sequence if $\delta_1<\cdots<\delta_{r-1}$ (monotone increasing) or $\delta_1>\cdots>\delta_{r-1}$
(monotone decreasing), i.e., there is no local extremum.

We then have the following stepping-up properties, see \cite{G-R-S-1}.\medskip

\textbf{Property I.} For every triple $u < v < w$, $\delta(u,v) \neq \delta(v,w)$.
\medskip

\textbf{Property II.} For $v_1 < \cdots < v_r$, $\delta(v_1,v_r) = \max_{1 \leq j \leq r-1} \delta(v_j,v_{j+1})$.
\medskip

\begin{fact}\label{fact}
    Let $\{\delta_{i_j}\}_{j=1}^{s}\subset \{\delta_t\}_{t=1}^{r-1}$.
    If $\delta_{i_j}\neq\delta_{i_{j+1}}$ for all $j$, then any non-monotone sequence $\{\delta_{i_j}\}_{j=1}^{s}$ contains a local extremum.
\end{fact}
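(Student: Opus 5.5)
The Fact is a purely combinatorial statement about finite sequences of distinct consecutive values. I would prove its contrapositive: if $\{\delta_{i_j}\}_{j=1}^{s}$ has no local extremum and consecutive terms differ, then it is monotone. Write $a_j=\delta_{i_j}$. Here a local extremum of the subsequence means an index $2\le j\le s-1$ with $a_{j-1}<a_j>a_{j+1}$ or $a_{j-1}>a_j<a_{j+1}$. This is the same definition as in Section~\ref{sec2}, applied to the subsequence rather than to the original $\delta_t$'s.

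The key step is a sign argument. Since $a_j\neq a_{j+1}$ for every $j\in[s-1]$, each difference $a_{j+1}-a_j$ has a well-defined sign $\epsilon_j\in\{+,-\}$. An index $j\in\{2,\dots,s-1\}$ is a local maximum exactly when $(\epsilon_{j-1},\epsilon_j)=(+,-)$, and a local minimum exactly when $(\epsilon_{j-1},\epsilon_j)=(-,+)$. So $j$ is a local extremum if and only if $\epsilon_{j-1}\neq\epsilon_j$.

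Now suppose there is no local extremum. Then $\epsilon_{j-1}=\epsilon_j$ for all $2\le j\le s-1$, so by induction on $j$ all the signs $\epsilon_1,\dots,\epsilon_{s-1}$ coincide. If they are all $+$, then $a_1<\cdots<a_s$; if they are all $-$, then $a_1>\cdots>a_s$. Either way the subsequence is monotone, which contradicts the hypothesis and proves the Fact.

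For $s\le 2$ the sequence is trivially monotone, because consecutive terms are distinct. So the non-monotone hypothesis forces $s\ge3$, and the argument above applies.

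There is no real obstacle. The only point needing care is that the hypothesis "$\delta_{i_j}\neq\delta_{i_{j+1}}$" is exactly what makes every sign $\epsilon_j$ well defined; without it, a sequence with a plateau could be non-monotone yet have no strict local extremum. In applications, this hypothesis will typically be supplied by Property~I (when the indices are consecutive) or by Property~II combined with distinctness of the chosen $\delta$'s.
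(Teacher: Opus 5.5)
Your sign-change argument is correct and complete: distinct consecutive terms give each step a well-defined sign, a local extremum is exactly a sign change, and a sequence with no sign change is strictly monotone. The paper gives no proof of this Fact and treats it as self-evident; it only adds that consecutive $\delta_i$ are distinct (by Property I), so the full sequence $\{\delta_i\}_{i=1}^{r-1}$ satisfies the hypothesis, and your argument is the routine justification the authors leave implicit.
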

\medskip

Since $\delta_{i-1}\neq\delta_i$ for every $i$, every non-monotone sequence $\{\delta_i\}_{i=1}^{r-1}$ has a local extremum.
We will also use the following stepping-up properties, which are easy consequences of Properties I and II, see \cite{F-H-L-L,H-L-L-W-1,M-S-3}.
\medskip

\textbf{Property III.} For every 4-tuple $v_1 < \cdots < v_4$, if $\delta(v_1,v_2) > \delta(v_2,v_3)$, then $\delta(v_1,v_2) \neq \delta(v_3,v_4)$. Note that if $\delta(v_1,v_2) < \delta(v_2,v_3)$, it is possible that $\delta(v_1,v_2) = \delta(v_3,v_4)$.
\medskip

\textbf{Property IV.} For $v_1 < \cdots < v_r$, set $\delta_j = \delta(v_j,v_{j+1})$ for $j \in [r-1]$ and suppose that $\delta_1,\ldots,\delta_{r-1}$ forms a monotone sequence. Then for every subset of $k$ vertices $v_{i_1},v_{i_2},\ldots,v_{i_k}$ where $v_{i_1} < \cdots < v_{i_k}$, $\delta(v_{i_1},v_{i_2}),\delta(v_{i_2},v_{i_3}),\ldots,\delta(v_{i_{k-1}},v_{i_k})$ forms a monotone sequence. Moreover for every subset of $k-1$ such $\delta_j$'s, i.e. $\delta_{j_1},\delta_{j_2},\ldots,\delta_{j_{k-1}}$, there are $k$ vertices $v_{i_1},\ldots,v_{i_k}$ such that $\delta(v_{i_t},v_{i_{t+1}}) = \delta_{j_t}$.
\medskip

\section{The upper bound for $f^{(4)}_{5^-,6}(N)$}
The upper bound for $f^{(4)}_{5^-,6}(N)$ follows by applying a variant of the Erd\H{o}s-Hajnal stepping-up lemma starting from a graph to construct a $4$-graph. This process begins with a graph possessing a specific property, the existence of which is guaranteed by a direct application of the probabilistic method.
\begin{lemma}\label{phi-$H^4_4$-6}
For every $n\ge 5$, there exists an absolute constant $c>0$ such that there is a red/blue coloring $\phi$ of the pairs of $\{0,1,\ldots,\lfloor 2^{cn}\rfloor-1\}$ with the property that every $n$-set $A\subset \{0,1,\ldots,\lfloor 2^{cn}\rfloor-1\}$ contains a $6$-tuple $a_1< a_2< a_3< a_4< a_5< a_6$ satisfying 
$$\phi(a_1,a_4)=\phi(a_2,a_4)=\phi(a_3,a_4)=\phi(a_3,a_5)=\phi(a_4,a_5)=\text{red},$$
$$\phi(a_1,a_2)=\phi(a_1,a_3)=\phi(a_2,a_3)=\phi(a_3,a_6)=\phi(a_4,a_6)=\phi(a_5,a_6)=\text{blue}.$$

\end{lemma}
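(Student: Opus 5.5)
We color each pair of $V=\{0,1,\ldots,M-1\}$, $M=\lfloor 2^{cn}\rfloor$, red or blue independently with probability $1/2$. We then show that with positive probability every $n$-set $A\subset V$ contains the required $6$-tuple. The natural tool is a union bound over all $n$-sets $A$. For this to work, the failure probability for a fixed $A$ must be $2^{-\Omega(n^2)}$, since there are only $\binom{M}{n}\le 2^{cn^2}$ sets $A$.

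To get that bound for a fixed $A$, we find many $6$-tuples in $A$ whose $11$ relevant pairs are pairwise disjoint as sets of pairs across different tuples. Only the $11$ pairs listed in the statement are constrained; the pairs $a_1a_5, a_1a_6, a_2a_5, a_2a_6$ are free. The events ``tuple $j$ has the prescribed colors'' then depend on disjoint sets of independent edges, so they are mutually independent. Each event has probability $2^{-11}$. Hence the probability that none occurs is at most $(1-2^{-11})^{m}\le e^{-m/2^{11}}$, where $m$ is the number of such tuples.

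It remains to find $m=\Omega(n^2)$ pair-disjoint ordered $6$-subsets of an $n$-set. A $6$-set uses $15$ pairs. A good packing of $K_n$ by edge-disjoint copies of $K_6$ has $\Omega(n^2)$ copies. Any packing suffices, for example a greedy or algebraic construction, or simply Rödl's nibble, though something elementary suffices. Two $6$-sets sharing at most one vertex share no pair. So we need a family of $\Omega(n^2)$ subsets of $A$ of size $6$ that pairwise intersect in at most one element.

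The step needing care is constructing this family explicitly for arbitrary $n\ge 5$. We do it as follows. Write the elements of $A$ in increasing order and identify a subset of size $6p$ with $[6]\times\mathbb{Z}_p$, where $p$ is a prime with $p\approx n/6$. For $a,b\in\mathbb{Z}_p$, take the line $\{(i,a i+b): i\in[6]\}$. Two distinct lines meet in at most one point, since $p>5$ for $n$ large enough. This gives $p^2=\Omega(n^2)$ sets. Order each set's elements increasingly to obtain $a_1<\dots<a_6$; the required color pattern refers only to this order. For small $n$, the claim holds trivially after adjusting $c$. For $n\ge 5$ with $n<36$, we need at least one tuple, which requires $n\ge 6$. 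For $n=5$ the statement is vacuous or needs a $6$-set. To handle $n\ge5$ uniformly we pick $c$ small enough that $M<n$ for small $n$, which makes the statement vacuous.

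Combining these steps, the expected number of bad $A$ is at most
\[
\binom{M}{n}\exp\!\bigl(-\Omega(n^2)\bigr)\le \exp\!\bigl(cn^2\ln 2-c'n^2\bigr)<1
\]
for $c$ small. Hence some coloring $\phi$ has no bad $A$.
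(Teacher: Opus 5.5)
Your proposal is correct and follows the paper's proof. Both use a uniformly random coloring, a family of $\Omega(n^2)$ six-subsets of $A$ pairwise sharing at most one vertex, independence of the resulting $2^{-11}$-probability events, and a union bound over at most $\binom{M}{n}\le 2^{cn^2}$ sets. The paper cites a partial Steiner $(n,6,2)$-system of Erd\H{o}s--Hanani where you construct one explicitly from lines over $\mathbb{Z}_p$, and your treatment of small $n$ (choosing $c$ so the statement is vacuous there) is slightly more careful than the paper's.
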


\noindent\emph{Proof of Lemma \ref{phi-$H^4_4$-6}.~} Set $D = \lfloor 2^{cn} \rfloor$, where $c$ is a sufficiently small constant that will be determined later. Consider the red/blue coloring $\phi$ of the pairs (edges) of $\{0,1,\ldots,D-1\}$, where each pair is colored red or blue independently with probability $1/2$ for each color.

We call a 6-tuple $a_1, a_2, a_3, a_4, a_5, a_6 \in \{0,1,\ldots,D-1\}$ with $a_1< a_2< a_3< a_4< a_5< a_6$ \emph{good} if 
$$\phi(a_1,a_4)=\phi(a_2,a_4)=\phi(a_3,a_4)=\phi(a_3,a_5)=\phi(a_4,a_5)=\text{red},$$
$$\phi(a_1,a_2)=\phi(a_1,a_3)=\phi(a_2,a_3)=\phi(a_3,a_6)=\phi(a_4,a_6)=\phi(a_5,a_6)=\text{blue},$$
and \emph{bad} otherwise.

Then for a fixed $6$-tuple, the probability that it is bad is $1-\frac{1}{2^{11}}=\frac{2047}{2048}$.
Now, let $A$ be a fixed $n$-subset of $\{0,1,\ldots,D-1\}$ and estimate the probability that it contains no good $6$-tuple.
Note that there exists a partial Steiner $(n,6,2)$-system $S$ \footnote{A 6-uniform hypergraph on an $n$-vertex set, with at least $c'n^2$ edges (for some constant $c'>0$), in which every pair of vertices is contained in at most one 6-tuple.}\cite{E-H-2}. Since the events for 6-tuples in $S$ are independent due to the fact that any two such $6$-tuples intersect in at most one vertex, the probability that all 6-tuples in $S$ are bad is at most $(2047/2048)^{c'n^2}$, and the probability that $A$ has no good 6-tuple is at most the probability that all 6-tuples in $S$ are bad. Therefore, the expected number of $n$-sets $A$ with only bad 6-tuples is at most
$$\binom{D}{n} \left(\frac{2047}{2048}\right)^{c'n^2} < \frac{1}{2},$$
again where we take $c$ sufficiently small. 
Thus the expected number of $n$-sets containing no good $6$-tuple is less than $1/2$. Hence there exists a coloring $\phi$ for which every $n$-set contains a good $6$-tuple. 
\hfill$\Box$

Let $c>0$ be the constant from Lemma \ref{phi-$H^4_4$-6}, and let $U=\{0,1,\ldots,\lfloor2^{cn}\rfloor-1\}$ and $\phi: \binom{U}{2}\rightarrow \{\text{red}, \text{blue}\}$ be a $2$-coloring of the pairs of $U$ satisfying the properties given in the lemma. Now, let $N=2^{\lfloor2^{cn}\rfloor}$ and $V(H)=\{0,1,\ldots,N-1\}$. We shall use the coloring $\phi$ to produce a $K^{(4)}_6$-free $4$-graph $H$ on $V(H)$ with $\alpha_{5^-}(H)<2^5n^5+1$ as follows. For any $4$-tuple $e=\langle v_1,v_2,v_3,v_4 \rangle$ of $V(H)$, set $e\in E(H)$ if and only if one of the following holds:
\begin{enumerate}
	\item[\textbf{(I)}] $\delta_1<\delta_2<\delta_3$, $\phi(\delta_1,\delta_2)=\text{red}$ and $\phi(\delta_1,\delta_3)=\phi(\delta_2,\delta_3)=\text{blue}$.
    \item[\textbf{(II)}] $\delta_1>\delta_2>\delta_3$,  $\phi(\delta_1,\delta_2)=\phi(\delta_1,\delta_3)=\text{red}$ and
    $\phi(\delta_2,\delta_3)=\text{blue}$.
    \item[\textbf{(III)}] $\delta_1>\delta_2<\delta_3$, $\delta_1< \delta_3$ and $\phi(\delta_1,\delta_3)=\phi(\delta_2,\delta_3)=\text{red}$. 
	\item[\textbf{(IV)}] $\delta_1>\delta_2<\delta_3$, $\delta_1> \delta_3$ and $\phi(\delta_2,\delta_3)=\text{blue}$.
\end{enumerate}

We first show that $H$ is $K_{6}^{(4)}$-free. 
Suppose to the contrary that there exists a set $P=\langle v_1,\ldots,v_{6} \rangle$ that induces a $K^{(4)}_{6}$ in $H$. We need the following claim.

\begin{claim}\label{no-mono4-6}
	There is no monotone subsequence $\{\delta'_{\ell}\}_{\ell=1}^{4}\subset \{\delta_i\}_{i=1}^{5}$ such that $\delta(u_1,\ldots,u_5)=\{\delta'_{1},\ldots,\delta'_{4}\}$ for some $\{u_1,\ldots,u_5\}\subset\{v_1,\ldots,v_{6}\}$.
\end{claim}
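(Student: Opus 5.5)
We argue by contradiction, using only the fact that every $4$-subset of $P$ is an edge of $H$, Property IV, and the colour conditions in rules (I) and (II). Suppose there are $u_1<\cdots<u_5$ in $P$ with $\delta(u_1,\ldots,u_5)=(\delta'_1,\delta'_2,\delta'_3,\delta'_4)$ monotone. Since the sequence is monotone, the $\delta'_\ell$ are pairwise distinct elements of $U$, so $\phi$ is defined on every pair of them.

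First we transfer information from the $\delta'$'s to $4$-subsets. By Property IV, applied to $u_1,\ldots,u_5$, two things hold:
\begin{itemize}
\item every $4$-subset of $\{u_1,\ldots,u_5\}$ has a monotone $\delta$-sequence, with the same direction as $(\delta'_\ell)$;
\item for every choice of three indices $\ell_1<\ell_2<\ell_3$, some $4$-subset $\{w_1<w_2<w_3<w_4\}$ satisfies $\delta(w_1,w_2,w_3,w_4)=(\delta'_{\ell_1},\delta'_{\ell_2},\delta'_{\ell_3})$.
\end{itemize}
Each such $4$-subset lies in $P$, so it is an edge of $H$. Its $\delta$-sequence is monotone, so it cannot satisfy (III) or (IV). Hence it satisfies (I) if the sequence increases and (II) if it decreases.

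\textbf{Increasing case.} Suppose $\delta'_1<\delta'_2<\delta'_3<\delta'_4$. Rule (I) gives, for every triple $a<b<c$ taken from $\{\delta'_1,\ldots,\delta'_4\}$,
\[
\phi(a,b)=\text{red},\qquad \phi(a,c)=\phi(b,c)=\text{blue}.
\]
\begin{itemize}
\item The triple $(\delta'_1,\delta'_2,\delta'_3)$ has $\{\delta'_2,\delta'_3\}$ in the position of $(b,c)$, so $\phi(\delta'_2,\delta'_3)=\text{blue}$.
\item The triple $(\delta'_2,\delta'_3,\delta'_4)$ has $\{\delta'_2,\delta'_3\}$ in the position of $(a,b)$, so $\phi(\delta'_2,\delta'_3)=\text{red}$.
\end{itemize}
These conclusions contradict each other.

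\textbf{Decreasing case.} Suppose $\delta'_1>\delta'_2>\delta'_3>\delta'_4$. Rule (II) gives, for every triple $a>b>c$,
\[
\phi(a,b)=\phi(a,c)=\text{red},\qquad \phi(b,c)=\text{blue}.
\]
\begin{itemize}
\item The triple $(\delta'_1,\delta'_2,\delta'_3)$ forces $\phi(\delta'_2,\delta'_3)=\text{blue}$.
\item The triple $(\delta'_2,\delta'_3,\delta'_4)$ forces $\phi(\delta'_2,\delta'_3)=\text{red}$.
\end{itemize}
Again we have a contradiction, which proves the claim.

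The only step needing care is the use of Property IV's second part. We must check that each of the two triples is realized by consecutive gaps of an actual $4$-subset of the $u_i$. This holds concretely: $\{u_1,u_2,u_3,u_4\}$ realizes $(\delta'_1,\delta'_2,\delta'_3)$, and $\{u_2,u_3,u_4,u_5\}$ realizes $(\delta'_2,\delta'_3,\delta'_4)$. Given these, the argument is a short case check.
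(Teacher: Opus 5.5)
Your proof is correct and is essentially the paper's argument. The edges $\{u_1,\ldots,u_4\}$ and $\{u_2,\ldots,u_5\}$ have monotone $\delta$-sequences, so they must fall under rule (I) or (II). This forces $\phi(\delta'_2,\delta'_3)$ to be blue from the first edge and red from the second. Your appeal to Property IV is unnecessary, since these two consecutive triples come directly from the definition of $\delta(u_1,\ldots,u_5)$.
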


\noindent\emph{Proof of Claim \ref{no-mono4-6}.~}
Suppose, to the contrary, that there is a monotone subsequence $\{\delta'_{\ell}\}_{\ell=1}^{4}\subset \{\delta_i\}_{i=1}^{5}$ such that $\delta(u_1,\ldots,u_5)=\{\delta'_{1},\ldots,\delta'_{4}\}$ for some $\{u_1,\ldots,u_5\}\subset\{v_1,\ldots,v_{6}\}$. Note that $\delta(u_1,u_2,u_3,u_4)=(\delta'_1,\delta'_2,\delta'_3)$, so $\phi(\delta'_2,\delta'_3)=\text{blue}$ by \textbf{(I)} or \textbf{(II)}. But  $\delta(u_2,u_3,u_4,u_5)=(\delta'_2,\delta'_3,\delta'_4)$ implies 
$\phi(\delta'_2,\delta'_3)=\text{red}$, a contradiction.\hfill$\Box$

Now, we claim that there is no local maximum in $\delta(P)$. In fact, if there exists a local maximum $\delta_i$, then $(v_{i-1},v_i,v_{i+1},v_{i+2})$ is not an edge since $\delta(v_{i-1},v_i,v_{i+1},v_{i+2})=(\delta_{i-1}<\delta_i>\delta_{i+1})$ .
It follows from Claim \ref{no-mono4-6} that we have $\delta_1>\delta_2>\delta_3<\delta_4<\delta_5$. Note that $\delta_2\neq\delta_4$ by Property III. Since $\delta(v_3,v_4,v_5,v_6)=(\delta_3<\delta_4<\delta_5)$, $(v_3,v_4,v_5,v_6)$ is an edge implies $\phi(\delta_3,\delta_4)=\text{red}$ and $\phi(\delta_3,\delta_5)=\text{blue}$ by \textbf{(I)}. If $\delta_2>\delta_4$, then $(v_2,v_3,v_4,v_5)$ is an edge implies $\phi(\delta_3,\delta_4)=\text{blue}$ by \textbf{(IV)} since $\delta(v_2,v_3,v_4,v_5)=(\delta_2>\delta_3<\delta_4)$, a contradiction. If $\delta_2<\delta_4$, then we have $\delta_2<\delta_5$. Note that $\delta(v_2,v_3,v_4,v_6)=(\delta_2>\delta_3<\delta_5)$. $(v_2,v_3,v_4,v_6)$ is an edge implies $\phi(\delta_3,\delta_5)=\text{red}$ by \textbf{(III)}, a contradiction.

Thus, $H$ is $K^{(4)}_6$-free, as desired. 
\medskip

Now we show that $\alpha_{5^-}(H)<2^{5}n^{5}+1$. Suppose to the contrary that there are vertices $Q=\langle v_1,v_2,\cdots ,v_m\rangle$,  $m=2^{5}n^{5}+1$, which induce a $5^-$-independent set in $H$. Recall that $\delta_i=\delta(v_i,v_{i+1})$, and hence $\delta(Q)=\{\delta_1,\ldots, \delta_{m-1}\}$.

\begin{lemma}\label{no-mono-n}
There is no monotone subsequence $\{\delta_{i_j}\}_{j=1}^n\subset \{\delta_i\}_{i=1}^{m-1}$ such that for any $\{j_\ell:\ell\in[6]\}\subset[n]$ with $j_1< \cdots< j_6$ , there exists $\{u_1,\ldots,u_7\}\subset\{v_1,\ldots,v_m\}$ such that $\delta(u_1,\ldots,u_7)=\{\delta_{i_{j_1}},\ldots,\delta_{i_{j_6}}\}$.
\end{lemma}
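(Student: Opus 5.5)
The plan is to argue by contradiction and reduce everything to the good $6$-tuple supplied by Lemma~\ref{phi-$H^4_4$-6}. Suppose such a monotone subsequence $\{\delta_{i_j}\}_{j=1}^n$ exists. Because it is monotone, its $n$ values are distinct elements of $U=\{0,1,\ldots,\lfloor 2^{cn}\rfloor-1\}$, so they form an $n$-set $A\subset U$. By Lemma~\ref{phi-$H^4_4$-6}, $A$ contains a good $6$-tuple $a_1<\cdots<a_6$. By hypothesis these six values are realized: there are $u_1<\cdots<u_7$ in $Q$ with $\delta(u_1,\ldots,u_7)$ equal to these six values, listed in the monotone order of the subsequence. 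I will then exhibit five of the $u$'s spanning at least four edges of $H$. Such a set contains a copy of $5^-$, contradicting that $Q$ is $5^-$-independent.

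The key tool is Property~II applied to a monotone sequence. If the $\delta$'s of $u_1,\ldots,u_7$ are increasing, then $\delta(u_p,u_q)=\delta(u_{q-1},u_q)$, the last gap. If they are decreasing, then $\delta(u_p,u_q)=\delta(u_p,u_{p+1})$, the first gap. Either way, every $4$-subset of the $u$'s has a monotone $\delta$-triple of the same type, consisting of three of the $a$'s. So only rules \textbf{(I)} (increasing) or \textbf{(II)} (decreasing) can apply to it.

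\emph{Increasing case:} $\delta(u_1,\ldots,u_7)=(a_1<\cdots<a_6)$. Rule \textbf{(I)} needs $\phi(x,y)=$ red and $\phi(x,z)=\phi(y,z)=$ blue for the triple $x<y<z$. From the good colouring, $(a_3,a_4,a_6)$, $(a_3,a_5,a_6)$ and $(a_4,a_5,a_6)$ all qualify. Take the five vertices $u_3,\ldots,u_7$, so $\delta(u_3,\ldots,u_7)=(a_3,a_4,a_5,a_6)$, and delete one vertex at a time using the last-gap rule.
\begin{itemize}
\item[] Deleting $u_3$ or $u_4$ gives $(a_4,a_5,a_6)$, an edge.
\item[] Deleting $u_5$ gives $(a_3,a_5,a_6)$, an edge.
\item[] Deleting $u_6$ gives $(a_3,a_4,a_6)$, an edge.
\end{itemize}
So these five vertices span at least four edges.

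\emph{Decreasing case:} $\delta(u_1,\ldots,u_7)=(a_6>\cdots>a_1)$. Rule \textbf{(II)}, for a triple of values $p>q>r$ read in this order, needs $\phi(p,q)=\phi(p,r)=$ red and $\phi(q,r)=$ blue. The triples $(a_4,a_3,a_2)$, $(a_4,a_3,a_1)$ and $(a_4,a_2,a_1)$ all qualify, because $a_4$ is red to $a_1,a_2,a_3$ and the pairs $a_1a_2$, $a_1a_3$, $a_2a_3$ are blue. Take the five vertices $u_3,\ldots,u_7$, so $\delta(u_3,\ldots,u_7)=(a_4,a_3,a_2,a_1)$, and use the first-gap rule.
\begin{itemize}
\item[] Deleting $u_4$ gives $(a_4,a_2,a_1)$, an edge.
\item[] Deleting $u_5$ gives $(a_4,a_3,a_1)$, an edge.
\item[] Deleting $u_6$ or $u_7$ gives $(a_4,a_3,a_2)$, an edge.
\end{itemize}
Again we get at least four edges.

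The only delicate step is the computation of the induced $\delta$-triples of the $4$-subsets, i.e.\ the last-gap versus first-gap rule from Property~II. The rest is reading off which triples the colouring of Lemma~\ref{phi-$H^4_4$-6} makes edges. It also matters that the induced $\delta$-sequence of a $4$-subset of a monotone sequence is again monotone (Property~IV), so rules \textbf{(III)} and \textbf{(IV)} never interfere.
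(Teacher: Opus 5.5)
Your proof is correct and matches the paper's argument: in both the increasing and decreasing cases you pass to the five vertices $u_3,\ldots,u_7$ and use Property~II to compute the induced $\delta$-triples. This gives the same four edges via rule \textbf{(I)}, respectively \textbf{(II)}, from the good $6$-tuple of Lemma~\ref{phi-$H^4_4$-6}. Your remark that rules \textbf{(III)} and \textbf{(IV)} cannot interfere is unnecessary, since a $4$-set is an edge as soon as one rule holds, but it does no harm.
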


\noindent\emph{Proof of Lemma \ref{no-mono-n}.~}Suppose to the contrary that $\{\delta_{i_j}\}_{j=1}^n$ is such a monotone subsequence. 

If $\{\delta_{i_j}\}_{j=1}^n$ is monotone increasing, then it follows from Lemma \ref{phi-$H^4_4$-6} that there is a $6$-tuple $\delta_{i_{j_1}},\ldots,\delta_{i_{j_6}}$ with $\delta_{i_{j_1}}<\cdots<\delta_{i_{j_6}}$ such that  
$$\phi(\delta_{i_{j_1}},\delta_{i_{j_4}})=\phi(\delta_{i_{j_2}},\delta_{i_{j_4}})=\phi(\delta_{i_{j_3}},\delta_{i_{j_4}})=\phi(\delta_{i_{j_3}},\delta_{i_{j_5}})=\phi(\delta_{i_{j_4}},\delta_{i_{j_5}})=\text{red},$$
$$\phi(\delta_{i_{j_1}},\delta_{i_{j_2}})=\phi(\delta_{i_{j_1}},\delta_{i_{j_3}})=\phi(\delta_{i_{j_2}},\delta_{i_{j_3}})=\phi(\delta_{i_{j_3}},\delta_{i_{j_6}})=\phi(\delta_{i_{j_4}},\delta_{i_{j_6}})=\phi(\delta_{i_{j_5}},\delta_{i_{j_6}})=\text{blue}.$$
Since there exists $\{u_1,\ldots,u_7\}\subset \{v_1,\ldots,v_m\}$ such that $\delta(u_1,\ldots,u_7)=\{\delta_{i_{j_1}},\ldots,\delta_{i_{j_6}}\}$ from the assumption, it follows from Property II that $\delta(u_4,u_5,u_6,u_7)=\delta(u_3,u_5,u_6,u_7)=(\delta_{i_{j_4}}<\delta_{i_{j_5}}<\delta_{i_{j_6}})$, $\delta(u_3,u_4,u_5,u_7)=(\delta_{i_{j_3}}<\delta_{i_{j_4}}<\delta_{i_{j_6}})$ and $\delta(u_3,u_4,u_6,u_7)=(\delta_{i_{j_3}}<\delta_{i_{j_5}}<\delta_{i_{j_6}})$. Thus, $(u_3,u_5,u_6,u_7)$, $(u_4,u_5,u_6,u_7)$, $(u_3,u_4,u_5,u_7)$ and $(u_3,u_4,u_6,u_7)$ are edges from \textbf{(I)}. Consequently, $\{u_3,\ldots,u_7\}$ contains a copy of $5^-$, a contradiction.

If $\{\delta_{i_j}\}_{j=1}^n$ is monotone decreasing, then it follows from Lemma \ref{phi-$H^4_4$-6} that there is a $6$-tuple $\delta_{i_{j_1}},\ldots,\delta_{i_{j_6}}$ with $\delta_{i_{j_1}}>\cdots>\delta_{i_{j_6}}$ such that  
$$\phi(\delta_{i_{j_3}},\delta_{i_{j_6}})=\phi(\delta_{i_{j_3}},\delta_{i_{j_5}})=\phi(\delta_{i_{j_3}},\delta_{i_{j_4}})=\phi(\delta_{i_{j_2}},\delta_{i_{j_3}})=\phi(\delta_{i_{j_2}},\delta_{i_{j_4}})=\text{red},$$
$$\phi(\delta_{i_{j_5}},\delta_{i_{j_6}})=\phi(\delta_{i_{j_4}},\delta_{i_{j_6}})=\phi(\delta_{i_{j_4}},\delta_{i_{j_5}})=\phi(\delta_{i_{j_1}},\delta_{i_{j_4}})=\phi(\delta_{i_{j_1}},\delta_{i_{j_3}})=\phi(\delta_{i_{j_1}},\delta_{i_{j_2}})=\text{blue}.$$
Since there exists $\{u_1,\ldots,u_7\}\subset \{v_1,\ldots,v_m\}$ such that $\delta(u_1,\ldots,u_7)=\{\delta_{i_{j_1}},\ldots,\delta_{i_{j_6}}\}$ from the assumption, $\delta(u_3,u_4,u_5,u_6)=\delta(u_3,u_4,u_5,u_7)=(\delta_{i_{j_3}}>\delta_{i_{j_4}}>\delta_{i_{j_5}})$, $\delta(u_3,u_5,u_6,u_7)=(\delta_{i_{j_3}}>\delta_{i_{j_5}}>\delta_{i_{j_6}})$ and $\delta(u_3,u_4,u_6,u_7)=(\delta_{i_{j_3}}>\delta_{i_{j_4}}>\delta_{i_{j_6}})$. Thus, $(u_3,u_4,u_5,u_6)$, $(u_3,u_4,u_5,u_7)$, $(u_3,u_5,u_6,u_7)$ and $(u_3,u_4,u_6,u_7)$ are edges from \textbf{(II)}. Consequently, $\{u_3,\ldots,u_7\}$ contains a copy of $5^-$, a contradiction.
\hfill$\Box$

\begin{claim}\label{rrr}
For any $\{u_1,\ldots,u_5\}\subset\{v_1,\ldots,v_{m}\}$ with $\delta(u_1,\ldots,u_5)=\{\delta'_{1},\ldots,\delta'_{4}\}$, none of the following occurs.

$(\Gamma_1)$. $\delta'_1>\delta'_2>\delta'_3<\delta'_4$ and $\delta'_1<\delta'_4$ and $\phi(\delta'_{1},\delta'_{4})=\phi(\delta'_{2},\delta'_{4})=\phi(\delta'_{3},\delta'_{4})=\text{red}$;

$(\Gamma_2)$. $\delta'_1>\delta'_2<\delta'_3<\delta'_4$ and $\delta'_1>\delta'_4$ and $\phi(\delta'_{2},\delta'_{4})=\phi(\delta'_{3},\delta'_{4})=\text{blue}$.
\end{claim}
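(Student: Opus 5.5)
Under either configuration we will exhibit four edges of $H$ among the five $4$-subsets of $\{u_1,\ldots,u_5\}$. Then $\{u_1,\ldots,u_5\}\subset Q$ spans a copy of $5^-$, which contradicts the assumption that $Q$ is a $5^-$-independent set. The tools are Property II, which lets us compute $\delta$ of a $4$-subset obtained by deleting one of the $u_i$, and the rules \textbf{(I)}--\textbf{(IV)} defining $E(H)$. For each $4$-subset we compute its $\delta$-sequence, determine its shape (which of \textbf{(I)}--\textbf{(IV)} applies), and compare with the colours forced by $(\Gamma_1)$ or $(\Gamma_2)$.

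\textbf{Case $(\Gamma_1)$.} Here $\delta'_1>\delta'_2>\delta'_3<\delta'_4$ and $\delta'_1<\delta'_4$, so $\delta'_4$ is the largest of the four values. By Property II we get $\delta(u_1,u_3)=\delta'_1$, $\delta(u_2,u_4)=\delta'_2$ and $\delta(u_3,u_5)=\delta'_4$. Hence:
\begin{itemize}
\item $\delta(u_2,u_3,u_4,u_5)=(\delta'_2>\delta'_3<\delta'_4)$ with $\delta'_2<\delta'_4$;
\item $\delta(u_1,u_3,u_4,u_5)=(\delta'_1>\delta'_3<\delta'_4)$ with $\delta'_1<\delta'_4$;
\item $\delta(u_1,u_2,u_4,u_5)=(\delta'_1>\delta'_2<\delta'_4)$ with $\delta'_1<\delta'_4$;
\item $\delta(u_1,u_2,u_3,u_5)=(\delta'_1>\delta'_2<\delta'_4)$ with $\delta'_1<\delta'_4$.
\end{itemize}
All four are of type \textbf{(III)}. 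The pairs whose colour is required to be red are always among $(\delta'_1,\delta'_4)$, $(\delta'_2,\delta'_4)$, $(\delta'_3,\delta'_4)$, and these are red by $(\Gamma_1)$. So these four $4$-tuples are edges, and $\{u_1,\ldots,u_5\}$ contains a $5^-$.

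\textbf{Case $(\Gamma_2)$.} Here $\delta'_1>\delta'_2<\delta'_3<\delta'_4$ and $\delta'_1>\delta'_4$, so $\delta'_1$ is the largest value. Property II gives $\delta(u_3,u_5)=\delta'_4$, $\delta(u_2,u_4)=\delta'_3$ and $\delta(u_1,u_3)=\delta'_1$. Hence:
\begin{itemize}
\item $\delta(u_1,u_2,u_3,u_5)=(\delta'_1>\delta'_2<\delta'_4)$;
\item $\delta(u_1,u_2,u_4,u_5)=(\delta'_1>\delta'_3<\delta'_4)$;
\item $\delta(u_1,u_3,u_4,u_5)=(\delta'_1>\delta'_3<\delta'_4)$.
\end{itemize}
In each of these the first entry exceeds the last, so they are of type \textbf{(IV)}. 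They are edges because $\phi(\delta'_2,\delta'_4)=\phi(\delta'_3,\delta'_4)=\text{blue}$.

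The two remaining $4$-subsets are not determined by the hypotheses; this is the only delicate point, and we split on the colour $\phi(\delta'_2,\delta'_3)$.
\begin{itemize}
\item If $\phi(\delta'_2,\delta'_3)=\text{blue}$, then $(u_1,u_2,u_3,u_4)$ has $\delta=(\delta'_1>\delta'_2<\delta'_3)$ with $\delta'_1>\delta'_3$. It is therefore an edge by \textbf{(IV)}.
\item If $\phi(\delta'_2,\delta'_3)=\text{red}$, then $(u_2,u_3,u_4,u_5)$ has the increasing sequence $(\delta'_2<\delta'_3<\delta'_4)$ with $\phi(\delta'_2,\delta'_3)=\text{red}$ and $\phi(\delta'_2,\delta'_4)=\phi(\delta'_3,\delta'_4)=\text{blue}$. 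It is therefore an edge by \textbf{(I)}.
\end{itemize}
In either case we obtain a fourth edge, hence a copy of $5^-$ in $Q$, again a contradiction.
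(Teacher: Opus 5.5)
Your proposal is correct and follows the paper's proof essentially verbatim. In $(\Gamma_1)$ you use Property II to make all four $4$-subsets containing $u_5$ edges of type \textbf{(III)}. In $(\Gamma_2)$ you get three type-\textbf{(IV)} edges, then split on $\phi(\delta'_2,\delta'_3)$ to get a fourth edge from either $(u_1,u_2,u_3,u_4)$ via \textbf{(IV)} or $(u_2,u_3,u_4,u_5)$ via \textbf{(I)}.
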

\noindent\emph{Proof of Claim \ref{rrr}.~} For $(\Gamma_1)$, suppose to the contrary that there exists $\delta(u_1,\ldots,u_5)=(\delta'_{1}>\delta'_{2}>\delta'_{3}<\delta'_{4})$ such that $\delta'_1<\delta'_4$ and $\phi(\delta'_{1},\delta'_{4})=\phi(\delta'_{2},\delta'_{4})=\phi(\delta'_{3},\delta'_{4})=\text{red}$. By Property II, we have $\delta(u_1,u_2,u_3,u_5)=\delta(u_1,u_2,u_4,u_5)=(\delta'_1>\delta'_2<\delta'_4)$, $\delta(u_1,u_3,u_4,u_5)=(\delta'_1>\delta'_3<\delta'_4)$ and $\delta(u_2,u_3,u_4,u_5)=(\delta'_2>\delta'_3<\delta'_4)$. Together with $\phi(\delta'_{1},\delta'_{4})=\phi(\delta'_{2},\delta'_{4})=\phi(\delta'_{3},\delta'_{4})=\text{red}$ and \textbf{(III)}, this implies that $(u_1,u_2,u_3,u_5)$, $(u_1,u_2,u_4,u_5)$, $(u_1,u_3,u_4,u_5)$ and $(u_2,u_3,u_4,u_5)$ are edges. Consequently, $\{u_1,\ldots,u_5\}$ contains a copy of $5^-$, a contradiction.

For $(\Gamma_2)$, suppose to the contrary that there exists $\delta(u_1,\ldots,u_5)=(\delta'_{1}>\delta'_{2}<\delta'_{3}<\delta'_{4})$ such that $\delta'_1>\delta'_4$ and $\phi(\delta'_{2},\delta'_{4})=\phi(\delta'_{3},\delta'_{4})=\text{blue}$. By Property II, we obtain $\delta(u_1,u_2,u_4,u_5)=\delta(u_1,u_3,u_4,u_5)=(\delta'_1>\delta'_3<\delta'_4)$ and $\delta(u_1,u_2,u_3,u_5)=(\delta'_1>\delta'_2<\delta'_4)$. Together with $\phi(\delta'_{2},\delta'_{4})=\phi(\delta'_{3},\delta'_{4})=\text{blue}$ and \textbf{(IV)}, this implies that $(u_1,u_2,u_4,u_5)$, $(u_1,u_3,u_4,u_5)$ and $(u_1,u_2,u_3,u_5)$ are edges. Moreover, if $\phi(\delta'_{2},\delta'_{3})=\text{blue}$, 
then $(u_1,u_2,u_3,u_4)$ is an edge of $H$, since $\delta(u_1,u_2,u_3,u_4)=(\delta'_1>\delta'_2<\delta'_3)$ and \textbf{(IV)}; if $\phi(\delta'_{2},\delta'_{3})=\text{red}$, then $(u_2,u_3,u_4,u_5)$ is an edge of $H$, since $\delta(u_2,u_3,u_4,u_5)=(\delta'_2<\delta'_3<\delta'_4)$ and \textbf{(I)}. Thus, in either case, $\{u_1,\ldots,u_5\}$ contains a copy of $5^-$, a contradiction.\hfill$\Box$

Let $\beta_i=\frac{m-1}{(2n)^i}$, for $i\in[0,5]$. Since $m-1=(2n)^5$, each $\beta_i$ is an integer.
For $t\in[5]$, we will greedily construct \emph{$t$-layer local maxima sequences} $\Delta^{(t)}$ such that $\Delta^{(t)}\subset\Delta^{(t-1)}$, starting with $\Delta^{(0)}=\delta(Q)$, and satisfy the following property. (We do not require that the elements of $\Delta^{(t)}$ be distinct.)

\begin{itemize}
    \item[($\ast$)] For two consecutive elements $\delta_a$, $\delta_b\in\Delta^{(t)}$, we have $\delta_x<\max\{\delta_a ,\delta_b\}$ for all $a<x<b$, and hence $\delta_a\neq\delta_b$.
\end{itemize}

For $t\ge1$, assume now that we have obtained $\Delta^{(t-1)}$ satisfying the desired property. We restrict our attention to $\Delta^{(t-1)}$ and we will find $\Delta^{(t)}$ to be the first $\beta_t$ local maxima (with respect to  $\Delta^{(t-1)}$) as follows. For convenience, we abbreviate ``with respect to" as ``w.r.t." in the following discussion. We claim first that there is no monotone consecutive subsequence of length $n$. Otherwise, suppose such a subsequence $Q'$ exists. Without loss of generality, assume $Q'$ is increasing. For any $\delta_{j_1},\delta_{j_2},\ldots,\delta_{j_6}\in Q'$ with $j_1<\cdots<j_6$. Then $\delta(v_{j_1},v_{j_1+1},v_{j_2+1},v_{j_3+1},v_{j_4+1},v_{j_5+1},v_{j_6+1})=(\delta_{j_1}<\delta_{j_2}<\cdots<\delta_{j_6})$ by noting the first part of the property ($\ast$) of $\Delta^{(t-1)}$ and Property II, which contradicts Lemma \ref{no-mono-n}. It follows from the second part of the property ($\ast$) of $\Delta^{(t-1)}$ and Fact \ref{fact} that we can set $\Delta^{(t)}$ to be the first $\beta_t$ local maxima (w.r.t. $\Delta^{(t-1)}$). Therefore, $\Delta^{(t)}\subset \Delta^{(t-1)}$. 

To show the  property ($\ast$) for $\Delta^{(t)}$,  we consider two consecutive elements $\delta_a$, $\delta_b\in\Delta^{(t)}$ and  we may assume that $\delta_{a},\delta_{i_1},\delta_{i_2},\cdots,\delta_{i_j},\delta_{b}$ are consecutive elements in $\Delta^{(t-1)}$. Note that $\delta_{a}$ and $\delta_{b}$ are consecutive local maxima (w.r.t. $\Delta^{(t-1)}$), we have $\delta_{i_\ell}<\max\{\delta_a, \delta_b\}$ for $\ell\in[j]$. Furthermore, it follows from the inductive hypothesis that $\delta_x<\max\{\delta_{i_\ell},\delta_{i_{\ell+1}}\}$ for all $i_\ell<x<i_{\ell+1}$ and $\ell\in[j-1]$, then $\delta_x<\max\{\delta_{i_\ell},\delta_{i_{\ell+1}}\}<\max\{\delta_{a},\delta_{b}\}$. Thus, $\delta_x<\max\{\delta_{a},\delta_{b}\}$ for all $a<x<b$. Moreover, Property I implies that $\delta_a\neq\delta_b$, as desired. Otherwise $\delta(v_a,v_b)=\delta_a=\delta_b=\delta(v_b,v_{b+1})$, a contradiction.
\medskip

For $t\in[5]$ and $\delta_j\in\Delta^{(t)}\backslash\Delta^{(t+1)}$, where $\Delta^{(6)}=\emptyset$. Note that $\delta_j$ is a local maximum (w.r.t. $\Delta^{(t-1)}$), we always let $\delta_{j^-}$ and $\delta_{j^+}$ be the \textbf{closest} element to the left and right of $\delta_j$ in the sequence $\Delta^{(t-1)}$, respectively. Thus, 
$\delta_{j^-},\delta_{j^+}<\delta_{j}$. In particular, $\delta_{j^-},\delta_{j^+}\in \Delta^{(t-1)}\setminus\Delta^{(t)}$. From the above greedy construction, we can obtain the following observation by repeatedly using ($\ast$).

\begin{observation}\label{observation-1}
For $t\in [5]$ and  $\delta_j\in \Delta^{(t)}\backslash\Delta^{(t+1)}$, 
we have $\delta_x<\delta_{j}$ for each $x\in[j^-,j^+]\backslash \{j\}$.
\end{observation}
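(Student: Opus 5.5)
We argue by induction on $t$, since the claim is just the property ($\ast$) applied at each level between $\Delta^{(t-1)}$ and $\Delta^{(0)}$. Fix $t\in[5]$ and $\delta_j\in\Delta^{(t)}\setminus\Delta^{(t+1)}$. Since $\delta_j$ is a local maximum w.r.t.\ $\Delta^{(t-1)}$ and $\delta_{j^-},\delta_{j^+}$ are its neighbours in $\Delta^{(t-1)}$, we have $\delta_{j^-},\delta_{j^+}<\delta_j$. Split $[j^-,j^+]\setminus\{j\}$ into three parts: the two endpoints $j^-,j^+$, the open interval $(j^-,j)$, and the open interval $(j,j^+)$. The endpoints are already handled.

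For $x$ with $j^-<x<j$, the elements $\delta_{j^-}$ and $\delta_j$ are consecutive in $\Delta^{(t-1)}$, so ($\ast$) for $\Delta^{(t-1)}$ gives
\[
\delta_x<\max\{\delta_{j^-},\delta_j\}=\delta_j .
\]
The case $j<x<j^+$ is symmetric, using the consecutive pair $\delta_j,\delta_{j^+}$. For $t=1$, ($\ast$) for $\Delta^{(0)}=\delta(Q)$ is vacuous because there are no indices strictly between consecutive elements, so the intervals are empty and the claim follows from $\delta_{j^\pm}<\delta_j$ alone.

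The only real point is that ($\ast$) must already hold for $\Delta^{(t-1)}$ for every $t\in[5]$. This was established inductively in the construction, where ($\ast$) for $\Delta^{(t)}$ was derived from ($\ast$) for $\Delta^{(t-1)}$ together with the local-maximum choice. So no single step is a real obstacle. The one thing to keep in mind is that the neighbours $\delta_{j^\pm}$ are taken in $\Delta^{(t-1)}$, not in $\delta(Q)$, which is exactly what makes the maximum in ($\ast$) equal to $\delta_j$.
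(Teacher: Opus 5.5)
Your proof is correct and is essentially the paper's argument: the paper just says the observation follows by repeatedly using $(\ast)$. You make this explicit by applying $(\ast)$ for $\Delta^{(t-1)}$ to the consecutive pairs $(\delta_{j^-},\delta_j)$ and $(\delta_j,\delta_{j^+})$, together with $\delta_{j^\pm}<\delta_j$ from the local-maximum choice. Strictly speaking no new induction is needed, since the induction is already built into the proof that $(\ast)$ holds at every level.
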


Note that $|\Delta^{(5)}|=\beta_{5}=1$. Choose $\delta_{a}\in \Delta^{(5)}$. Let $\delta_{b}:=\delta_{a^+}\in \Delta^{(4)}$ and $\delta_{c}:=\delta_{b^-}\in \Delta^{(3)}$, then we have $a<c<b$ and $\delta_{c}<\delta_{b}<\delta_{a}$. 

We claim that $\phi(\delta_{c},\delta_{b})=\text{red}$. Otherwise, we have $\phi(\delta_{c},\delta_{b})=\text{blue}$. Let $\delta_{d}:=\delta_{c^-}\in \Delta^{(2)}$, $\delta_{e}:=\delta_{d^+}\in \Delta^{(1)}$ and $\delta_{f}:=\delta_{e^+}\in\delta(Q)$. It follows from Observation \ref{observation-1} that we have $\delta(v_{a},v_{d},v_{d+1},v_{c+1},v_{b+1})=(\delta_{a}>\delta_{d}<\delta_{c}<\delta_{b})$. 
By Claim \ref{rrr} $(\Gamma_2)$, we must have $\phi(\delta_d,\delta_b)=\text{red}$. 
Similarly, $\delta(v_{a},v_{e},v_{e+1},v_{c+1},v_{b+1})=(\delta_{a}>\delta_{e}<\delta_{c}<\delta_{b})$ and $\delta(v_{a},v_{f},v_{f+1},v_{c+1},v_{b+1})=(\delta_{a}>\delta_{f}<\delta_{c}<\delta_{b})$ imply $\phi(\delta_e,\delta_b)=\phi(\delta_f,\delta_b)=\text{red}$. Since $\delta(v_{d},v_{e},v_{f},v_{f+1},v_{b+1})=(\delta_{d}>\delta_{e}>\delta_{f}<\delta_{b})$ and $\delta_{d}<\delta_{b}$, we have a contradiction with Claim \ref{rrr} $(\Gamma_1)$.

Thus, we have $\phi(\delta_{c},\delta_{b})=\text{red}$. Let $\delta_{d}:=\delta_{c^+}\in \Delta^{(2)}$. 
If $\phi(\delta_{d},\delta_{b})=\text{blue}$, then we let $\delta_{e}:=\delta_{d^-}\in \Delta^{(1)}$ and $\delta_{f}:=\delta_{e^+}\in\delta(Q)$. Thus, we have $\delta(v_{a},v_{e},v_{e+1},v_{d+1},v_{b+1})=(\delta_{a}>\delta_{e}<\delta_{d}<\delta_{b})$. By Claim \ref{rrr} $(\Gamma_2)$, we have $\phi(\delta_e,\delta_b)=\text{red}$.
Similarly, $\delta(v_{a},v_{f},v_{f+1},v_{d+1},v_{b+1})=(\delta_{a}>\delta_{f}<\delta_{d}<\delta_{b})$ implies $\phi(\delta_f,\delta_b)=\text{red}$. Then $\delta(v_{c},v_{e},v_{f},v_{f+1},v_{b+1})=(\delta_{c}>\delta_{e}>\delta_{f}<\delta_{b})$, contradicting Claim \ref{rrr} $(\Gamma_1)$.

Hence, we have $\phi(\delta_{d},\delta_{b})=\text{red}$. Let $\delta_{e}:=\delta_{d^+}\in \Delta^{(1)}$ and $\delta_{f}:=\delta_{e^-}\in\delta(Q)$. Thus, we have $\delta(v_{a},v_{f},v_{f+1},v_{e+1},v_{b+1})=(\delta_{a}>\delta_{f}<\delta_{e}<\delta_{b})$. 
It follows from Claim \ref{rrr} $(\Gamma_2)$ that $\phi(\delta_f,\delta_b)=\text{red}$ or $\phi(\delta_e,\delta_b)=\text{red}$. For the former case, note that  $\phi(\delta_{c},\delta_{b})=\phi(\delta_{d},\delta_{b})=\text{red}$ and $\delta(v_{c},v_{d},v_{f},v_{f+1},v_{b+1})=(\delta_{c}>\delta_{d}>\delta_{f}<\delta_{b})$, which contradicts Claim \ref{rrr} $(\Gamma_1)$. For the latter case, note that  $\phi(\delta_{c},\delta_{b})=\phi(\delta_{d},\delta_{b})=\text{red}$ and $\delta(v_{c},v_{d},v_{e},v_{e+1},v_{b+1})=(\delta_{c}>\delta_{d}>\delta_{e}<\delta_{b})$, which contradicts Claim \ref{rrr} $(\Gamma_1)$ again.

Thus, $\alpha_{5^-}(H)<2^5n^5+1$. This completes the proof of the upper bound for $f^{(4)}_{5^-,6}(N)$.

\section{The upper bound for $f^{(4)}_{H^4_2,6}(N)$}
Again, we use a variant of the Erd\H{o}s-Hajnal stepping-up lemma to construct a $4$-graph from a graph, thereby establishing a new upper bound for $f^{(4)}_{H^4_2,6}(N)$. We use the following lemma and omit its proof since it follows by a similar probabilistic argument used for Lemma \ref{phi-$H^4_4$-6}.

\begin{lemma}\label{phi-$H^4_2$}
	For $n\ge 5$, there is an absolute constant $c>0$ such that there exists a red/blue coloring $\phi$ of the pairs of $\{0,1,\ldots,\lfloor2^{cn}\rfloor-1\}$ such that every $n$-set $A\subset \{0,1,\ldots,\lfloor 2^{cn}\rfloor-1\}$ contains a $4$-tuple $a_i< a_j< a_k< a_\ell$ satisfying $$\phi(a_i,a_k)=\phi(a_j,a_\ell)=\text{red},~~~\phi(a_i,a_j)=\phi(a_j,a_k)=\phi(a_k,a_\ell)=\text{blue}.$$
	
\end{lemma}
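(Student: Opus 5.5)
We mirror the proof of Lemma \ref{phi-$H^4_4$-6}. Set $D=\lfloor 2^{cn}\rfloor$ with $c>0$ a small absolute constant to be fixed at the end. Colour each pair of $\{0,1,\ldots,D-1\}$ red or blue independently with probability $1/2$. Call a $4$-tuple $a_i<a_j<a_k<a_\ell$ \emph{good} if $\phi(a_i,a_k)=\phi(a_j,a_\ell)=\text{red}$ and $\phi(a_i,a_j)=\phi(a_j,a_k)=\phi(a_k,a_\ell)=\text{blue}$. The condition specifies the colours of five distinct pairs; the sixth pair $(a_i,a_\ell)$ is unconstrained. So a fixed $4$-tuple is good with probability exactly $2^{-5}$, and bad with probability $31/32$.

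Fix an $n$-set $A$. Take a partial Steiner $(n,4,2)$-system $S$ on $A$, that is, a family of at least $c'n^2$ four-element subsets of $A$ in which any two members share at most one element. Such a family exists by \cite{E-H-2} (or by a greedy choice). Order each member of $S$ increasingly to obtain a $4$-tuple. Two members share at most one vertex, so they share no pair, and the events ``this $4$-tuple is bad'' depend on disjoint sets of independent colour variables. These events are therefore mutually independent. Hence
\[
\Pr[A \text{ has no good } 4\text{-tuple}]\le \Pr[\text{every member of } S \text{ is bad}] \le (31/32)^{c'n^2}.
\]

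Summing over all $n$-sets, the expected number of $n$-sets without a good $4$-tuple is at most
\[
\binom{D}{n}(31/32)^{c'n^2}\le 2^{cn^2}\,2^{-c'' n^2},
\]
where $c''=c'\log_2(32/31)>0$. Choosing $c<c''$, with room to spare, makes this less than $1/2$ for all $n\ge 5$. Hence some colouring $\phi$ has a good $4$-tuple inside every $n$-set, which proves the lemma.

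The only step needing care is having $c'n^2$ edge-disjoint $4$-sets uniformly for $n\ge 5$, including small $n$. For small $n$ one can either shrink $c$ further, or observe that $\lfloor 2^{cn}\rfloor<n$ for small $c$, in which case the statement is vacuous. Everything else is routine.
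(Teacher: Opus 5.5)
Your proposal is correct and is exactly the argument the paper intends. The paper omits this proof, saying it follows by the same argument as the colouring lemma for $f^{(4)}_{5^-,6}$: a uniformly random colouring, independence across a partial Steiner system, and a union bound over $n$-sets, which here become five constrained pairs (probability $2^{-5}$) and a partial Steiner $(n,4,2)$-system. Your treatment of small $n$, choosing $c$ so that $\lfloor 2^{cn}\rfloor<n$ there and the statement is vacuous, is a valid way to make the paper's ``take $c$ sufficiently small'' rigorous.
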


Let $c>0$ be the constant from Lemma \ref{phi-$H^4_2$}, define $U=\{0,1,\ldots,\lfloor2^{cn}\rfloor-1\}$ and $\phi: \binom{U}{2}\rightarrow \{\text{red}, \text{blue}\}$ be a $2$-coloring of the pairs of $U$ satisfying the properties given in the lemma. Now, let $N=2^{\lfloor2^{cn}\rfloor}$ and $V(H)=\{0,1,\ldots,N-1\}$. We shall use the coloring $\phi$ to produce a $K^{(4)}_6$-free $4$-graph $H$ on $V(H)$ with $\alpha_{H^4_2}(H)<n^2$ as follows. For any $4$-tuple $e=\langle v_1,v_2,v_3,v_4 \rangle$ of $V(H)$, set $e\in E(H)$ if and only if one of the following holds:
\begin{enumerate}
	\item[\textbf{(i)}] $\delta_1,\delta_2,\delta_3$ form a monotone sequence, $\phi(\delta_1,\delta_3)=\text{red}$ and $\phi(\delta_1,\delta_2)=\phi(\delta_2,\delta_3)=\text{blue}$.
	\item[\textbf{(ii)}] $\delta_1>\delta_2<\delta_3$, $\delta_1< \delta_3$ and $\phi(\delta_1,\delta_3)=\text{red}$.
	\item[\textbf{(iii)}] $\delta_1<\delta_2>\delta_3$ and $\phi(\delta_1,\delta_2)=\text{blue}$.
\end{enumerate}

We first show that $H$ is $K_{6}^{(4)}$-free. Suppose to the contrary that there exists a set $P=\langle v_1,\ldots,v_{6} \rangle$ inducing a $K^{(4)}_{6}$ in $H$. To reach a contradiction, we need the following claim.

\begin{claim}\label{no-mono2}
	There is no monotone subsequence $\{\delta'_{\ell}\}_{\ell=1}^{4}\subset \{\delta_i\}_{i=1}^{5}$ such that $\delta(u_1,\ldots,u_5)=\{\delta'_{1},\ldots,\delta'_{4}\}$ for some $\{u_1,\ldots,u_5\}\subset\{v_1,\ldots,v_{6}\}$.
\end{claim}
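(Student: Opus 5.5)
The plan is to argue by contradiction directly from the edge rules \textbf{(i)}--\textbf{(iii)}, in the same way as Claim~\ref{no-mono4-6}. The one change is that two $4$-tuples made of consecutive vertices no longer give a contradiction on their own, so we also use $4$-tuples that skip a vertex, and Property~II to compute their $\delta$-sequences.

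Suppose $\delta(u_1,\ldots,u_5)=(\delta'_1,\delta'_2,\delta'_3,\delta'_4)$ is monotone. By Property~IV, every $4$-subset of $\{u_1,\ldots,u_5\}$ then has a monotone $\delta$-sequence of the same direction. Since $P$ induces $K^{(4)}_6$, every such $4$-subset is an edge, and it can only be an edge through rule \textbf{(i)}. Rule \textbf{(i)}, applied to a $4$-tuple with sequence $(x,y,z)$, says $\phi(x,z)=\text{red}$ and $\phi(x,y)=\phi(y,z)=\text{blue}$.

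First apply this to the consecutive tuples. From $(u_1,u_2,u_3,u_4)$ we get $\phi(\delta'_1,\delta'_3)=\text{red}$ and $\phi(\delta'_1,\delta'_2)=\phi(\delta'_2,\delta'_3)=\text{blue}$. From $(u_2,u_3,u_4,u_5)$ we get $\phi(\delta'_2,\delta'_4)=\text{red}$ and $\phi(\delta'_2,\delta'_3)=\phi(\delta'_3,\delta'_4)=\text{blue}$. These colourings are consistent with each other, so we need a further $4$-tuple, and the choice depends on the direction of monotonicity.

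\emph{Decreasing case} ($\delta'_1>\delta'_2>\delta'_3>\delta'_4$). Use $(u_1,u_3,u_4,u_5)$. By Property~II, $\delta(u_1,u_3)=\max\{\delta'_1,\delta'_2\}=\delta'_1$, so the sequence is $(\delta'_1,\delta'_3,\delta'_4)$. Rule \textbf{(i)} then forces $\phi(\delta'_1,\delta'_3)=\text{blue}$, contradicting the red colour obtained above.

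\emph{Increasing case} ($\delta'_1<\delta'_2<\delta'_3<\delta'_4$). Use $(u_1,u_2,u_3,u_5)$. By Property~II, $\delta(u_3,u_5)=\max\{\delta'_3,\delta'_4\}=\delta'_4$, so the sequence is $(\delta'_1,\delta'_2,\delta'_4)$. Rule \textbf{(i)} then forces $\phi(\delta'_2,\delta'_4)=\text{blue}$, contradicting $\phi(\delta'_2,\delta'_4)=\text{red}$.

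The only delicate step is choosing the right skipped vertex in each direction. The dropped vertex must be the one whose merged $\delta$, computed as a maximum via Property~II, keeps a pair that has already been coloured red in a middle (blue-forced) position of the new tuple. Everything else is direct bookkeeping with Property~II and rule \textbf{(i)}.
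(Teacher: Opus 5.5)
Your proof is correct and uses the same idea as the paper: apply rule \textbf{(i)} to a consecutive $4$-tuple and to a $4$-tuple that skips one vertex (its $\delta$-sequence computed via Property~II), and read off a red/blue conflict. The differences are cosmetic. The paper treats only the increasing case, by symmetry, and drops the middle vertex: $(u_1,u_2,u_4,u_5)$ has sequence $(\delta'_1,\delta'_3,\delta'_4)$, which clashes with $\phi(\delta'_1,\delta'_3)=\text{red}$ from $(u_1,u_2,u_3,u_4)$. You instead choose a direction-dependent skipped vertex and handle both directions explicitly.
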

\noindent\emph{Proof of Claim \ref{no-mono2}.~}Suppose, to the contrary, that $\{\delta'_{\ell}\}_{\ell=1}^4$
is a monotone increasing subsequence, without loss of generality, and that there exists $\{u_1,\ldots,u_5\}\subset\{v_i\}_{i=1}^{6} $ such that $\delta(u_1,\ldots,u_5)=(\delta'_1<\delta'_2<\delta'_3<\delta'_4)$.
Since $\delta(u_1,u_2,u_3,u_4)=(\delta'_1<\delta'_2<\delta'_3)$, we have $\phi(\delta'_1,\delta'_3)=\text{red}$ from \textbf{(i)}, which implies that $(u_1,u_2,u_4,u_5)\notin E(H)$ since $\delta(u_1,u_2,u_4,u_5)=(\delta'_1<\delta'_3<\delta'_4)$ and $\phi(\delta'_1,\delta'_3)=\text{red}$, a contradiction.
\hfill$\Box$

Let $\delta_{k} = \delta(v_{k}, v_{k+1})$ be the unique largest element in $\{\delta_i\}_{i=1}^{5}$. The uniqueness of $\delta_{k}$ follows from Property III. 
We claim that $k\ge 3$. Otherwise, $k\le2$. We consider $\{\delta_{k+1},\delta_{k+2},\delta_{k+3}\}\subset \{\delta_i\}_{i=1}^{5}$. If $\delta_{k+1}>\delta_{k+2}>\delta_{k+3}$, then $\{\delta_{k},\delta_{k+1},\delta_{k+2},\delta_{k+3}\}$ is a monotone decreasing subsequence, which contradicts Claim \ref{no-mono2}. So we have $\delta_{k+1}<\delta_{k+2}$ or $\delta_{k+2}<\delta_{k+3}$  by noting Property I, without loss of generality, we may assume that $\delta_{k+1}<\delta_{k+2}$. Since $$\delta(v_{k},v_{k+1},v_{k+2},v_{k+3})=(\delta_k>\delta_{k+1}<\delta_{k+2}) ~\text{and}~ \delta_k>\delta_{k+2},$$ $(v_{k},v_{k+1},v_{k+2},v_{k+3})\notin E(H)$ by noting \textbf{(ii)}, a contradiction. 

We now shift our attention to the sequence 
$\{\delta_i\}_{i=1}^{k-1}$. We claim that there is no local extremum in $\{\delta_i\}_{i=1}^{k-1}$. Otherwise, suppose first that there exists an index $a$ such that $\delta_{a-1}<\delta_{a}>\delta_{a+1}$ in the sequence 
$\{\delta_i\}_{i=1}^{k-1}$. Since $(v_{a-1},v_{a},v_{a+1},v_{k+1})\in E(H)$ and $\delta(v_{a-1},v_{a},v_{a+1},v_{k+1})=(\delta_{a-1}<\delta_{a}<\delta_{k})$,  it follows from \textbf{(i)} that $\phi(\delta_{a},\delta_k)=\text{blue}$. Then, by \textbf{(ii)}, we have $(v_{a},v_{a+1},v_{a+2},v_{k+1})\notin E(H)$, because $\delta(v_{a},v_{a+1},v_{a+2},v_{k+1})=(\delta_{a}>\delta_{a+1}<\delta_{k})~\text{and}~ \delta_a<\delta_{k}$, a contradiction. Suppose now that there exists an index $a$ such that $\delta_{a-1}>\delta_{a}<\delta_{a+1}$ in the sequence $\{\delta_i\}_{i=1}^{k-1}$.  
Since $(v_{a-1},v_{a},v_{a+1},v_{a+2})\in E(H)$ and $\delta(v_{a-1},v_{a},v_{a+1},v_{a+2})=(\delta_{a-1}>\delta_{a}<\delta_{a+1})$, we have $\delta_{a-1}<\delta_{a+1}$ and $\phi(\delta_{a-1},\delta_{a+1})=\text{red}$ by \textbf{(ii)}. Then, by \textbf{(i)} and $\delta(v_{a-1},v_{a},v_{a+2},v_{k+1})=(\delta_{a-1}<\delta_{a+1}<\delta_{k})$, we have $(v_{a-1},v_{a},v_{a+2},v_{k+1})\notin E(H)$, again a contradiction.
Thus, $\{\delta_i\}_{i=1}^{k-1}$ is a monotone sequence. Note that 
$\{\delta_1,\delta_2,\delta_3,\delta_4\}$ is not a monotone sequence by Claim \ref{no-mono2}. Then, we have $k\le4$ and thus, $3\le k\le4$.  Since $\delta(v_1,v_2,v_{k+1},v_{k+2})=(\delta_1<\delta_k>\delta_{k+1})$, it follows from \textbf{(iii)} that $\phi(\delta_1,\delta_k)=\text{blue}$. However, $(v_1,v_2,v_3,v_{k+1})\in E(H)$  implies $\phi(\delta_1,\delta_k)=\text{red}$ by \textbf{(i)} or \textbf{(ii)}, a contradiction.

Thus, $H$ is $K^{(4)}_6$-free, as we need. 
\medskip

Set  $m=n^2$. It remains to show that $\alpha_{H^4_2}(H)<m$. On the contrary, there are vertices $Q=\langle v_1,v_2,\cdots ,v_m\rangle$ that induce a $H^4_2$-independent set in $H$. Recall that $\delta_i=\delta(v_i,v_{i+1})$ and $\delta(Q)=\{\delta_i\}_{i=1}^{m-1}$. We will also begin by noting the following lemma and omit its proof since it follows by a similar argument to that in Lemma \ref{no-mono-n}.
	\begin{lemma}\label{bad-$H^4_2$}
	There is no monotone subsequence $\{\delta_{i_j}\}_{j=1}^n\subset \{\delta_i\}_{i=1}^{m-1}$ such that for any $a,b,c,d \in [n]$ with $a<b<c<d$, there exists $\{u_1,\ldots,u_5\}\subset\{v_1,\ldots,v_m\}$ such that $\delta(u_1,\ldots,u_5)=\{\delta_{i_a},\delta_{i_b},\delta_{i_c},\delta_{i_d}\}$.
\end{lemma}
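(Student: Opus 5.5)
The plan is to apply Lemma~\ref{phi-$H^4_2$} to the set of values of the monotone subsequence, and then check that two consecutive sub-$4$-tuples of the resulting $5$ vertices are edges of type \textbf{(i)}, which already gives a copy of $H^4_2$. Suppose to the contrary that $\{\delta_{i_j}\}_{j=1}^n$ is such a monotone subsequence. Since it is strictly monotone, the values $\delta_{i_1},\ldots,\delta_{i_n}$ are $n$ distinct elements of $U=\{0,1,\ldots,\lfloor 2^{cn}\rfloor-1\}$. (Indeed, each $\delta_i$ lies in $\{0,\ldots,D-1\}$ with $D=\lfloor 2^{cn}\rfloor$, because $N=2^D$.) Let $A$ denote this $n$-set. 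Lemma~\ref{phi-$H^4_2$} then gives $a_i<a_j<a_k<a_\ell$ in $A$ with
\[
\phi(a_i,a_k)=\phi(a_j,a_\ell)=\text{red},\qquad \phi(a_i,a_j)=\phi(a_j,a_k)=\phi(a_k,a_\ell)=\text{blue}.
\]
Each of these four values equals some $\delta_{i_a}$, $\delta_{i_b}$, $\delta_{i_c}$, $\delta_{i_d}$. After reordering the indices so that $a<b<c<d$, the hypothesis supplies $\{u_1,\ldots,u_5\}\subset Q$ with $\delta(u_1,\ldots,u_5)=(x_1,x_2,x_3,x_4)$, where $x_1,x_2,x_3,x_4$ are these four values listed in the order of the subsequence.

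\emph{Increasing case.} Here $x_1<x_2<x_3<x_4$, with $x_1=a_i$, $x_2=a_j$, $x_3=a_k$, $x_4=a_\ell$. By Property~II, $\delta(u_1,u_2,u_3,u_4)=(x_1<x_2<x_3)$. This triple satisfies $\phi(x_1,x_3)=\text{red}$ and $\phi(x_1,x_2)=\phi(x_2,x_3)=\text{blue}$, so $(u_1,u_2,u_3,u_4)\in E(H)$ by \textbf{(i)}. Similarly, $\delta(u_2,u_3,u_4,u_5)=(x_2<x_3<x_4)$, and this triple satisfies $\phi(x_2,x_4)=\text{red}$ and $\phi(x_2,x_3)=\phi(x_3,x_4)=\text{blue}$, so $(u_2,u_3,u_4,u_5)\in E(H)$ by \textbf{(i)}.

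\emph{Decreasing case.} Here $x_1>x_2>x_3>x_4$, so that $x_4=a_i$, $x_3=a_j$, $x_2=a_k$, $x_1=a_\ell$. Since $\phi$ is a coloring of unordered pairs, the triple $(x_1,x_2,x_3)$ has $\phi(x_1,x_3)=\phi(a_j,a_\ell)=\text{red}$, $\phi(x_1,x_2)=\phi(a_k,a_\ell)=\text{blue}$ and $\phi(x_2,x_3)=\phi(a_j,a_k)=\text{blue}$. The triple $(x_2,x_3,x_4)$ has $\phi(x_2,x_4)=\phi(a_i,a_k)=\text{red}$, $\phi(x_2,x_3)=\text{blue}$ and $\phi(x_3,x_4)=\phi(a_i,a_j)=\text{blue}$. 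Both triples are monotone decreasing, so again $(u_1,u_2,u_3,u_4)$ and $(u_2,u_3,u_4,u_5)$ are edges by \textbf{(i)}.

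In either case, $\{u_1,\ldots,u_5\}\subset Q$ spans at least two edges. Any two distinct edges on $5$ vertices form a copy of $H^4_2$, which contradicts the assumption that $Q$ is $H^4_2$-independent.

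The main point to get right is the decreasing case. There the pattern from Lemma~\ref{phi-$H^4_2$} has to be read in reverse, and one must check that it still matches rule \textbf{(i)}. This works because the red and blue pattern is symmetric under reversing the order of $a_i,a_j,a_k,a_\ell$. The remaining steps are direct consequences of Property~II.
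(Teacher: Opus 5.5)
Your proof is correct and matches the argument the paper intends. The paper omits this proof as analogous to Lemma~\ref{no-mono-n}: apply the coloring lemma to the $n$ values of the monotone subsequence, reading the pattern in reverse in the decreasing case, obtain $u_1<\cdots<u_5$ from the hypothesis, and use rule \textbf{(i)} to show that $(u_1,u_2,u_3,u_4)$ and $(u_2,u_3,u_4,u_5)$ are both edges. (A minor point: Property~II is not needed for these two consecutive sub-tuples, since their $\delta$'s come directly from $\delta(u_1,\ldots,u_5)$.)
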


We claim that there is no integer $j \in [m-n]$ such that the sequence $\{\delta_i\}_{i=j}^{j+n-1}$ is monotone. Otherwise, for any length 4 monotone subsequence $\{\delta_{i_1}, \delta_{i_2}, \delta_{i_3}, \delta_{i_4}\} \subset \{\delta_i\}_{i=j}^{j+n-1}$, there exists $ \{u_1,\ldots,u_5\}\subset \{v_1, \ldots, v_m\}$ such that $\delta(u_1,\ldots,u_5)=\{\delta_{i_1}, \delta_{i_2}, \delta_{i_3}, \delta_{i_4}\}$ by Property IV, contradicting Lemma \ref{bad-$H^4_2$}. 

Next, we greedily construct the following sets $L_r$, $R_r$, $S_r${ for $r\le 2n-3$.} Start with $L_0=R_0=\emptyset$, $\sigma_0=0$, $\tau_0=n^2$ and $S_0=\{1,2,\cdots,n^2-1\}$. At each step $r$,
\begin{enumerate}
	\item $\sigma_r=0$ if $L_r$ is empty and $\sigma_r=\max(L_r)$  otherwise. Similarly, $\tau_r=n^2$ if $R_r$ is empty and $\tau_r=\min(R_r)$  otherwise.
	\item  $\delta_a>\delta_s$ for $a\in L_r$ and $s\in S_r$. Similarly, $\delta_b>\delta_s$ for $b\in R_r$ and $s\in S_r$.
    \item $S_r=\{j:\sigma_r<j<\tau_r\}$ and $\tau_r-\sigma_r\ge n^2-1-(n-1)|L_r|-|R_r|$. 
    \item $|L_r|+|R_r|=r$ and $|L_r|,|R_r|< n-1$  for $r<2n-3$.
\end{enumerate}
Note that these properties hold for $r=0$ by definition. 
Now assume that for some $r<2n-3$, 
we have constructed $L_r$, $R_r$, $S_r$ satisfying the desired properties, we shall construct $L_{r+1}$, $R_{r+1}$, $S_{r+1}$ and define $\delta_{i_{r+1}}$ as follows.

Let $\delta_{i_{r+1}}=\max_{s\in S_r} \delta_{s}$ be the unique largest element by noting $\tau_r-\sigma_r\ge n^2-1-(n-1)(n-2)-(n-2)>0$
, and the uniqueness of $\delta_{i_{r+1}}$ follows from Property III. 
We claim that either $i_{r+1}-\sigma_r\le n-1$ or $\tau_r-i_{r+1}= 1$.  Otherwise, $\sigma_r+n\le i_{r+1}<\tau_r-1$, then there exists $a<i_{r+1}-1$ in $S_r$ such that $\delta_a>\delta_{a+1}$  by Lemma \ref{bad-$H^4_2$} and we can take $b=i_{r+1}+1$ in $S_r$. 
Note that $$\delta(v_a,v_{a+1},v_{a+2},v_b,v_{b+1})=(\delta_a>\delta_{a+1}<\delta_{i_{r+1}}>\delta_{b})~\text{and}~\delta_a<\delta_{i_{r+1}}.$$
If $\phi(\delta_{a},\delta_{i_{r+1}})=\text{red}$,  then by \textbf{(ii)}, we have $(v_a,v_{a+1},v_{a+2},v_b),(v_a,v_{a+1},v_{a+2},v_{b+1})\in E(H)$. If $\phi(\delta_{a},\delta_{i_{r+1}})=\text{blue}$, then by  \textbf{(iii)}, we have $(v_a,v_{a+1},v_b,v_{b+1}),(v_a,v_{a+2},v_b,v_{b+1})\in E(H)$.	
Thus, $\{v_a,v_{a+1},v_{a+2},v_b,v_{b+1}\}$ always contains a copy of  $H^4_2$, a contradiction. 

Therefore, we have  $i_{r+1}-\sigma_r\le n-1$ or $\tau_r-i_{r+1}= 1$. If $i_{r+1}-\sigma_r\le n-1$, then set
$L_{r+1} = L_r\cup\{i_{r+1}\}$, $R_{r+1}=R_r$, $\sigma_{r+1}=i_{r+1}$, $\tau_{r+1}=\tau_r$ and $S_{r+1}=\{j:\sigma_{r+1}<j<\tau_{r+1}\}$, 
then $\tau_{r+1}-\sigma_{r+1}= \tau_{r}-i_{r+1}\ge \tau_{r}-(n-1+\sigma_{r})\ge n^2-1-(n-1)(|L_r|+1)-|R_r|=n^2-1-(n-1)|L_{r+1}|-|R_{r+1}|$. If $\tau_r-i_{r+1}= 1$, then set
$L_{r+1} = L_r$, $R_{r+1}=R_r\cup\{i_{r+1}\}$, $\sigma_{r+1}=\sigma_r$, $\tau_{r+1}=i_{r+1}$ and $S_{r+1}=\{j:\sigma_{r+1}<j<\tau_{r+1}\}$, 
then $\tau_{r+1}-\sigma_{r+1}= i_{r+1}-\sigma_{r}= (\tau_{r}-1)-\sigma_{r}\ge n^2-1-(n-1)|L_r|-(|R_r|+1)=n^2-1-(n-1)|L_{r+1}|-|R_{r+1}|$. 
Thus, in any case, the first three properties and the first part of the fourth property hold by definition. 

Now we consider the second part of the fourth property. Suppose to the contrary that $|L_{r+1}|=n-1$ without loss of generality. Since $|S_{r+1}|= \tau_{r+1}-\sigma_{r+1}\ge n^2-(n-1)(n-1)-(n-1)>0$, we can take $i_{r+2}\in S_{r+1}$. Let $L_{r+1}\cup \{{i_{r+2}}\}=\{\ell_1,\ldots,\ell_n\}$ where $\ell_1<\ell_2<\cdots<\ell_n$, then we have $\delta_{{\ell_1}} >\delta_{{\ell_2}}>\cdots > \delta_{{\ell_n}}$. 
In particular, Property II implies that for any $a, b, c, d \in [n]$ and $a<b<c<d$,
$\delta(v_{{\ell_a}}, v_{{\ell_b}}, v_{{\ell_c}}, v_{{\ell_d}}, v_{{\ell_{d}}+1})=(\delta_{{\ell_a}} >\delta_{{\ell_b}}>\delta_{{\ell_c}} >\delta_{{\ell_d}} )$, which is in contradiction with Lemma \ref{bad-$H^4_2$}. Thus, the second part of fourth property holds as desired. 

We can construct these sets as long as $r< 2n-3$. Now, consider $r=2n-3$. Since $|L_{2n-3}|+|R_{2n-3}|=2n-3$, we have either $|L_{2n-3}| = n-1$ or $|R_{2n-3}| = n-1$. By a similar argument as above, we can always find a monotone sequence $\{\delta_{\ell_i}\}_{i=1}^n\subset \{\delta_i\}_{i=1}^{m-1}$ such that for any $a,b,c,d\in[n]$ with $a<b<c<d$, there exists a set $\{u_1,\ldots,u_5\}\subset \{v_1,\ldots ,v_m\}$ with $\delta(u_1,\ldots,u_5)=\{\delta_{\ell_a},\delta_{\ell_b},\delta_{\ell_c},\delta_{\ell_d}\}$. This contradicts Lemma \ref{bad-$H^4_2$}.
 
Thus, $\alpha_{H^4_2}(H)<n^2$. This completes the proof of the upper bound for $f^{(4)}_{H^4_2,6}(N)$.

\end{spacing}

\begin{thebibliography}{99}
\bibitem{A-K-S-1}
M. Ajtai, J. Koml\'{o}s, and E. Szemer\'{e}di, A note on Ramsey numbers, \emph{J. Combin. Theory Ser. A.}
29 (1980), 354--360.
\bibitem{A-K-S-2}
M. Ajtai, J. Koml\'{o}s, and E. Szemer\'{e}di, A dense infinite Sidon sequence, \emph{European J. Combin.} 2 (1981), 1--11.
\bibitem{ak}
 N. Alon and M. Krivelevich, Constructive bounds for a Ramsey-type problem, \emph{ Graphs Combin.} 13
(1997), 217--225.
\bibitem{B-C-L-1}
J. Balogh, C. Chen, and H. Luo, On the maximum $F$-free induced subgraphs in $K_t$-free graphs, \emph{Random Struct. Algorithms.} 66 (1), 2025.
\bibitem{B-K-2}
T. Bohman and P. Keevash, The early evolution of the $H$-free process, \emph{Invent. Math.} 181 (2010), 291--336.
\bibitem{bh}
 B. Bollob\'{a}s and H. R. Hind, Graphs without large triangle free subgraphs, \emph{ Discrete Math.} 87 (1991), 119--131.
\bibitem{C-G-M-S}
M. Campos, S. Griffiths, R. Morris and J. Sahasrabudhe,
\emph{An exponential improvement for diagonal Ramsey},
accepted for publication in \emph{Ann. of Math.}, to appear.
 \bibitem{C-J-M-S}
M. Campos, M. Jenssen, M. Michelen, and J. Sahasrabudhe, A new lower bound for the Ramsey
numbers $R(3, k)$, arXiv preprint \emph{arXiv:2505.13371}, 2025.
\bibitem{C-G-E}
F. R. K. Chung and R. L. Graham: Erd\H{o}s on Graphs: His Legacy of Unsolved Problems, A. K. Peters Ltd., Wellesley, MA, 1998.
\bibitem{cfs}
D. Conlon, J. Fox and B. Sudakov, An improved bound for the stepping-up lemma, \emph{Discrete Appl. Math.} 161 (2013), 1191--1196.
\bibitem{C-F-S-3}
D. Conlon, J. Fox and B. Sudakov, Hypergraph Ramsey numbers, \emph{J. Amer. Math. Soc.} 23 (2010), 247--266.
\bibitem{C-F-S-2}
D. Conlon, J. Fox and B. Sudakov, Short proofs of some extremal results,\emph{ Combin. Probab. Comput.} 23 (2014), 8--28.
\bibitem{C-F-S-4}
D. Conlon, J. Fox, and B. Sudakov, Recent developments in graph Ramsey theory, in \emph{Surveys in Combinatorics 2015}, London Math. Soc. Lecture Note Ser., Cambridge University Press, Cambridge, 2015, pp. 49--118.
\bibitem{D-H-L-W}
L. Du, X. Hu, R. Liu and G. Wang, A step towards the Erd\H{o}s-Rogers problem, arXiv preprint \emph{arXiv:2603.12610}, 2026.
\bibitem{D-M}
A. Dudek and D. Mubayi, On generalized Ramsey numbers for 3-uniform hypergraphs, \emph{J. Graph Theory}
76 (2014), 217--223.
\bibitem{D-R-R}
A. Dudek, T. Retter and V. R\'{o}dl, On generalized Ramsey numbers of Erd\H{o}s and Rogers, \emph{J. Combin. Theory Ser. B} 109 (2014), 213--227.
\bibitem{dr}
 A. Dudek and V. R\"{o}dl, On $K_s$-free subgraphs in $K_{s+k}$-free graphs and vertex Folkman numbers, \emph{ Combinatorica} 31 (2011), 39--53.
\bibitem{E-H-Con}
P. Erd\H{o}s and A. Hajnal, On Ramsey like theorems, problems and results, Combinatorics
(Proc. Conf. Combinatorial Math., Math. Inst., Oxford, 1972), 123--140, Inst. Math. Appl.,
Southend-on-Sea, 1972.
\bibitem{E-H-2}
P. Erd\H{o}s and H. Hanani, On a limit theorem in combinatorical analysis, \emph{Publ. Math Debrecen} 10 (1963), 10--13.
\bibitem{E-R-2}
P. Erd\H{o}s and R. Rado, Combinatorial theorems on classifications of subsets of a given set, \emph{P. London Math. Soc.} (3) 2 (1952), 417--439.
\bibitem{E-R-1}
P. Erd\H{o}s and C. A. Rogers, The construction of certain graphs, \emph{Canad. J. Math.} 14 (1962), 702--707.
\bibitem{F-H-L-L}
C. Fan, X. Hu, Q. Lin and X. Lu, New bounds of two hypergraph Ramsey problems, arXiv preprint \emph{arXiv:2410.22019}, 2024.
\bibitem{G-J-S-1}
L. Gishboliner, O. Janzer, and B. Sudakov, Induced subgraphs of $K_r$-free graphs and the Erd\H{o}s–Rogers problem, \emph{Combinatorica} 45 (2025).
\bibitem{GJ20}
W.~T. Gowers and O.~Janzer, Improved bounds for the {E}rd{\H o}s-{R}ogers function,
 \emph{ Adv. Combin.} 3 (2020), 27pp.
\bibitem{G-R-S-1}
R. L. Graham, B. L. Rothschild and J. H. Spencer, Ramsey Theory, 2nd edn, \emph{Wiley Interscience Series in Discrete Mathematics and Optimization } (Wiley, New York, 1990).
\bibitem{G-N-N-W}
P. Gupta, N. Ndiaye, S. Norin, and L. Wei, Optimizing the CGMS upper bound on Ramsey numbers, arXiv preprint \emph{arXiv:2407.19026}, 2024.
\bibitem{H-N-1}
X. He and J. Nie,
Generalized Erd\H{o}s--Rogers problems for hypergraphs,
\emph{European J. Combin.} 135 (2026), 104372.
\bibitem{H-H-K-P}
Z. Hefty, P. Horn, D. King and F. Pfender, Improving $R(3,k)$ in just two bites, arXiv preprint \emph{arXiv:2510.19718}, 2025.
\bibitem{H-L}
X. Hu and Q. Lin, Ramsey numbers and a general Erd\H{o}s-Rogers function, \emph{Discrete Math.} 347 (2024), 114203.
\bibitem{H-L-L-W-1}
X. Hu, Q. Lin, X. Lu and G. Wang, Phase transitions of the Erd\H{o}s-Gy\'{a}rf\'{a}s function, arXiv preprint \emph{arXiv:2504.05647}, 2025.
\bibitem{H-M-S}
Z. Hunter, A. Milojevi\'{c} and B. Sudakov, Gaussian random graphs and Ramsey numbers, arXiv preprint \emph{arXiv:2512.17718}, 2025.
\bibitem{Janzer}
O. Janzer and B.~Sudakov, Improved bounds for the Erd\H{o}s-Rogers $(s,s+2)$-problem, \emph{ Random Struct. Algorithms.} 66 (2025), 5 pp.
\bibitem{K-1}
 J. H. Kim. The Ramsey number $R(3, t)$ has order of magnitude $t^2/ \log t$, \emph{ Random Struct. Algorithms.} 7 (1995), 173--207.
\bibitem{kri-1}
 M. Krivelevich, $K_s$-free graphs without large $K_r$-free subgraphs, \emph{ Combin. Probab. Comput.} 3 (1994),
349--354.
\bibitem{kri-2} M. Krivelevich, Bounding Ramsey numbers through large deviation inequalities, \emph{ Random Struct. Algorithms.} 7 (1995), 145--155.
\bibitem{M-S-X}
J. Ma, W. Shen and S. Xie, An exponential improvement for Ramsey lower bounds, arXiv preprint \emph{arXiv:2507.12926}, 2025.
\bibitem{M-V-2}
S. Mattheus and J. Verstra\"{e}te, The asymptotics of $r(4,t)$, \emph{Ann. of Math.} 199 (2024), 919--941.
\bibitem{Mor}
R. Morris, Some recent results in Ramsey theory, arXiv preprint \emph{arXiv:2601.05221,} 2026.
\bibitem{L-R-Z}
Y. Li, C. C. Rousseau, W. Zang, An upper bound for Ramsey numbers, \emph{Appl. Math. Lett.} 17 (2004), 663--665.
\bibitem{M-S-1}
D. Mubayi and A. Suk, A survey of hypergraph Ramsey problems, \emph{Discrete Mathematics and Applications} (eds A. Raigorodskii and M. T. Rassias; Springer, Cham, 2020).
\bibitem{M-S-3}
D. Mubayi and A. Suk, New lower bounds for hypergraph Ramsey numbers, \emph{Bull. London Math. Soc.} 50 (2018), 189--201.
\bibitem{M-S-4}
D. Mubayi and A. Suk, Off-diagonal hypergraph Ramsey numbers, \emph{J. Combin. Theory Ser. B} 125 (2017), 168--177.
\bibitem{M-S-5}
D. Mubayi and A. Suk, Constructions in Ramsey theory, \emph{J. London Math. Soc.} (2) 97 (2018), 247--257.
\bibitem{M-V-1-1}
D. Mubayi and J. Verstra\"{e}te, Erd\H{o}s–Rogers functions for arbitrary pairs of graphs, arXiv preprint \emph{arXiv:2407.03121,} 2024.
\bibitem{R}
F.~P.~Ramsey, On a problem of formal logic,  \emph{ Proc.~London Math.~Soc.} 30 (1929), 264--286.
\bibitem{S-1}
J. B. Shearer, A note on the independence number of triangle-free graphs, \emph{Discrete Math.} 46 (1983), 83--87.
\bibitem{Sud-1}
B. Sudakov, Large $K_r$-free subgraphs in $K_s$-free graphs and some other Ramsey-type problems,
\emph{Random Struct. Algorithms.} 26 (2005), no. 3, 253--265.
\bibitem{Sud-2}
B. Sudakov, A new lower bound for a Ramsey-type problem, \emph{Combinatorica} 25 (2005), no. 4,
487--498.
\bibitem{W-1}
G. Wolfovitz, $K_4$-free graphs without large induced triangle-free subgraphs, \emph{Combinatorica} 33 (2013), 623--631.
\end{thebibliography}
\end{document}